\documentclass[preprint,11pt,oneside,onecolumn]{elsarticle}
\usepackage{array}
\usepackage{color}
\usepackage{mathrsfs}
\biboptions{numbers,sort&compress}
\usepackage{hyperref}
\usepackage{amssymb}
\usepackage{amsmath}
\usepackage{amsthm}
\allowdisplaybreaks
\usepackage{graphicx}
\usepackage{tikz}

\usetikzlibrary{shapes.arrows,chains,positioning}
\newtheorem{Theorem}{Theorem}[section]
\newtheorem{Lemma}[Theorem]{Lemma}
\newtheorem{Definition}[Theorem]{Definition}
\newtheorem{Proposition}[Theorem]{Proposition}


\begin{document}

\begin{frontmatter}


\title{Multivariate Dynamical Sampling in $l^2(\mathbb{Z}^2)$ and Shift-Invariant Spaces Associated with Linear Canonical Transform
\tnoteref{label1}}
\tnotetext[label1]{This work was partially supported by the National Natural Science Foundation of China(Grant Nos. 11801256 and 12061044).}

\author{Haiye Huo}
\ead{hyhuo@ncu.edu.cn}

\address{Department of Mathematics, School of Science, Nanchang University, Nanchang~330031, Jiangxi, China}

\author{Li Xiao\corref{corau}}
\cortext[corau]{Corresponding author}
\ead{xiaoli11@ustc.edu.cn}

\address{School of Information Science and Technology, University of Science and Technology of China, Hefei~230052, Anhui, China}

\begin{abstract}
In this paper, we investigate the multivariate dynamical sampling problem in $l^2(\mathbb{Z}^2)$ associated with the two-dimensional discrete time non-separable linear canonical transform (2D-DT-NS-LCT) and
shift-invariant spaces associated with the two-dimensional non-separable linear canonical transform (2D-NS-LCT), respectively. Specifically, we derive a sufficient and necessary condition under which a sequence in $l^2(\mathbb{Z}^2)$ (or a function in a shift-invariant space) can be stably recovered from its dynamical sampling measurements associated with the 2D-DT-NS-LCT (or the 2D-NS-LCT). We also present a simple example to elucidate our main results.
\end{abstract}

\begin{keyword}
Multivariate dynamical sampling; linear canonical transform (LCT); shift-invariant spaces
\end{keyword}
\end{frontmatter}


\section{Introduction}\label{sec1}

The linear canonical transform (LCT) is a general class of linear integral transformations with three free parameters, which includes many well-known
linear transforms as its special cases, such as Fourier transform (FT), fractional FT, scaling operation, and Fresnel transform \cite{book1,KI2012,FL2016,Huo2018A,WRL2009,ZZhang2015,XFH2021}. Therefore,
there has been growing interest in studying the LCT and its properties pertaining to applications across
the fields of signal processing and optics \cite{OZK2001,QLL2013,SO1998,xxx1}.

Signal sampling is a fundamental concept in digital signal processing, as it provides a bridge between continuous- and discrete-time signals. A variety of sampling theorems in the traditional FT domain have been generalized to the LCT domain in the broad sense that signals which are non-bandlimited in the FT domain may be bandlimited in the LCT domain \cite{Stern2006A,TLW2008,HS2015,SLS2012a,Huo2019N,XS2013,xiao11}.
In recent years, dynamical sampling has attracted empirical attention in the scientific community, which is a new way of signal sampling, compared with classical sampling techniques, and has potential applications to wireless sensor networks in the health, environment, and precision agriculture industries \cite{AT2014,ACC2021,CMP2020,CHR2018,jia1,jia2}. More specifically, the dynamical sampling refers to not only the signal $f$ that is sampled but also its various states at different times $\{t_1,t_2,\cdots,t_N\}$. It is to consider the problem of spatiotemporal sampling with which an initial state $f$ of an evolution process $f_t=A_tf$ is recovered from a combined set of coarse spatial samples $\{f(X), f_{t_1}(X), \cdots, f_{t_N}(X)\}$ of $f$ on $X\subset\Omega$ at varying time levels $\{t_1,t_2,\cdots,t_N\}$, where $\Omega$ is the domain of $f$. Most work of dynamical sampling aimed to derive some sufficient and necessary conditions on $A$ and $\Omega$ such that $f$ can be stably recovered from its dynamical sampling measurements $\{f(X),f_{t_1}(X),\cdots,f_{t_N}(X)\}$.

Of note, Aldroubi and his collaborators have intensively studied the dynamical sampling of signals in finite dimensional spaces \cite{ADK2013}, shift-invariant spaces
\cite{ADK2015,AAD2013}, and infinite dimensional spaces (e.g., $l^2(\mathbb{Z})$) \cite{ACM2017}.
Zhang et al \cite{ZLL2017} investigated the periodic nonuniform dynamical sampling in $l^2(\mathbb{Z})$ and shift-invariant spaces associated with the FT. More recently, Liu et al \cite{LWL2021} extended the dynamical sampling results associated with the FT to the special affine Fourier transform (also known as offset linear canonical transform) domain in a broader sense, i.e., signals are recovered from their dynamical sampling measurements associated with the special affine Fourier transform. However, all the mentioned dynamical sampling results above focus on one-dimensional signals. Motivated by applications in multi-dimensional signal systems, Zhang et al \cite{ZL2019} considered the multivariate dynamical sampling in $l^2(\mathbb{Z}^d)$ and
shift-invariant spaces associated with the $d$-dimensional FT.

To the best of our knowledge, there have been no studies on the multivariate dynamical sampling associated with the multi-dimensional LCT. In this paper, we therefore propose to investigate
the multivariate dynamical sampling of signals in $l^2(\mathbb{Z}^2)$ associated with the two-dimensional discrete time non-separable LCT (2D-DT-NS-LCT, defined below) and shift-invariant spaces associated with the two-dimensional non-separable LCT (2D-NS-LCT, defined below), respectively.
We derive a sufficient and necessary condition under which a sequence in $l^2(\mathbb{Z}^2)$ (or a function in a shift-invariant space $V(\phi)$ that is generated by $\phi\in L^2(\mathbb{R}^2)$) can be stably reconstructed from its dynamical sampling measurements associated with the 2D-DT-NS-LCT (or the 2D-NS-LCT).

The rest of this paper is organized as follows. In Section \ref{sec2}, we first introduce definitions of the 2D-NS-LCT, the 2D-DT-NS-LCT, and their related canonical convolution operators, and then obtain some important properties that will be utilized later. In Section \ref{sec3}, we consider the multivariate dynamical sampling in $l^2(\mathbb{Z}^2)$ (i.e., we derive a sufficient and necessary condition for sequences in $l^2(\mathbb{Z}^2)$ under which they can be recovered from their dynamical sampling measurements in a stable way) associated with the 2D-DT-NS-LCT, followed by the multivariate dynamical sampling in shift-invariant spaces associated with the 2D-NS-LCT in Section \ref{sec:shift}. In Section \ref{jjjia}, we present an example to elucidate our main results, and conclude this paper in Section \ref{sec5}.

\section{Preliminaries}\label{sec2}
The two-dimensional non-separable LCT (2D-NS-LCT) with parameter $\mathcal{M}\triangleq[A, B; C, D]$
of a signal $f\in L^2(\mathbb{R}^2)$ is defined by \cite{AB2006,KHO2010,PH2016,ZHS2014,ZWL2019}
\begin{equation}\label{def:LCT:L0}
(L_{\mathcal{M}}f)(\xi)=
    \frac{1}{\sqrt{\det{(iB)}}}\int_{\mathbb{R}^2} f(u)e^{i\pi u^TB^{-1}Au-2i\pi u^{T}B^{-1}\xi+i\pi \xi^{T}DB^{-1}\xi}{\rm{d}}u,
\end{equation}
where $u, \xi\in\mathbb{R}^2$ are two real column vectors, the superscript $(\cdot)^{T}$ denotes the transpose of a vector or matrix, $\det(\cdot)$ stands for the determinant of a matrix, and $A, B, C, D\in\mathbb{R}^{2\times2}$ are $2\times 2$ real matrices with $B$ being non-singular. The matrix $\mathcal{M}$ is real and symplectic so that the following equations hold:
\begin{equation}\label{ssads}
AB^T=BA^T,\; CD^T=DC^T, \;AD^T-BC^T=I,
\end{equation}
or
\begin{equation}
A^TC=C^TA,\; B^TD=D^TB, \;A^TD-C^TB=I.
\end{equation}
Here, $I$ stands for the $2\times 2$ identity matrix. From the perspective of group theory, the 2D-NS-LCT forms a symplectic group $S_p(4,\mathbb{R})$ with
ten parameters. We refer the reader to \cite{OZK1995,DWY2019} for more details about the 2D-NS-LCT.

To begin with, we introduce a new canonical convolution operator associated with the 2D-NS-LCT. Let $\lambda_{\mathcal{M}}(t)=e^{i\pi t^{T}B^{-1}At}$ be the
chirp-modulation function, and define

\begin{eqnarray*}
\overrightarrow{f}(t)&:=&\lambda_{\mathcal{M}}(t)f(t)=e^{i\pi t^{T}B^{-1}At}f(t),\\
\overleftarrow{f}(t)&:=&\overline{\lambda}_{\mathcal{M}}(t)f(t)=e^{-i\pi t^{T}B^{-1}At}f(t),
\end{eqnarray*}
where $\overline{z}$ means the conjugate of $z$.

\begin{Definition}\label{ C-Can-Conv}
Define the canonical convolution operator $\star_{c}$ of two functions $f,\;g\in L^2(\mathbb{R}^2)$ associated with the 2D-NS-LCT as
\begin{equation}\label{def:Can-Conv1}
(f\star_{c} g)(t)=\frac{\overline{\lambda}_{\mathcal{M}}(t)}{\sqrt{\det{(iB)}}}(\overrightarrow{f}\ast \overrightarrow{g})(t),
\end{equation}
where $\ast$ denotes the traditional convolution operator, i.e.,
\begin{equation}\label{def:Can-Conv2}
(f\ast g)(t)=\int_{\mathbb{R}^2}f(t-x)g(x){\rm{d}}x.
\end{equation}
\end{Definition}

We then have the following lemma about the canonical convolution operator $\star_{c}$ of two functions in the 2D-NS-LCT domain.
\begin{Lemma}\label{lem:Conv1}
For two functions $f,\;g\in L^2(\mathbb{R}^2)$, let $h(t)=(f\star_{c}g)(t)$. Then, the 2D-NS-LCT $(L_\mathcal{M}h)(\xi)$ of $h$ satisfies
\begin{equation}\label{eq:Conv1}
(L_\mathcal{M}h)(\xi)=\overline{\eta}_{\mathcal{M}}(\xi)(L_\mathcal{M}f)(\xi)(L_\mathcal{M}g)(\xi),
\end{equation}
where $\eta_{\mathcal{M}}(\xi)=e^{i\pi\xi^TDB^{-1}\xi}$.
\end{Lemma}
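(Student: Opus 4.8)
The plan is to reduce the canonical convolution identity to the classical convolution theorem for the ordinary Fourier transform, exploiting the chirp-modulation structure built into the 2D-NS-LCT. First I would rewrite the transform $(L_{\mathcal{M}}f)(\xi)$ from \eqref{def:LCT:L0} by factoring the $\xi$-only exponential $\eta_{\mathcal{M}}(\xi)=e^{i\pi\xi^{T}DB^{-1}\xi}$ out of the integral and absorbing the $u$-only exponential $e^{i\pi u^{T}B^{-1}Au}$ into $f$, so that the integrand becomes $\overrightarrow{f}(u)\,e^{-2i\pi u^{T}B^{-1}\xi}$. Writing $\widehat{g}(\omega)=\int_{\mathbb{R}^2}g(u)e^{-2i\pi u^{T}\omega}\,\mathrm{d}u$ for the ordinary Fourier transform and substituting $\omega=B^{-1}\xi$ (which is legitimate precisely because $B$ is non-singular), this yields the key reduction formula
\begin{equation*}
(L_{\mathcal{M}}f)(\xi)=\frac{\eta_{\mathcal{M}}(\xi)}{\sqrt{\det(iB)}}\,\widehat{\overrightarrow{f}}(B^{-1}\xi),
\end{equation*}
and the analogous formulas for $g$ and for $h$.

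Next I would compute the chirp-modulated version of $h=f\star_{c}g$. Multiplying the definition \eqref{def:Can-Conv1} by $\lambda_{\mathcal{M}}$ and using $\lambda_{\mathcal{M}}(t)\overline{\lambda}_{\mathcal{M}}(t)=|\lambda_{\mathcal{M}}(t)|^{2}=1$, which holds because $t^{T}B^{-1}At$ is real for real $t,A,B$, the two chirp factors cancel and I obtain $\overrightarrow{h}(t)=\frac{1}{\sqrt{\det(iB)}}(\overrightarrow{f}\ast\overrightarrow{g})(t)$. This is exactly the point of the definition: the canonical convolution is engineered so that chirp modulation converts it into an ordinary convolution, up to the constant $1/\sqrt{\det(iB)}$.

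Applying the classical convolution theorem $\widehat{\overrightarrow{f}\ast\overrightarrow{g}}=\widehat{\overrightarrow{f}}\cdot\widehat{\overrightarrow{g}}$ and evaluating at $B^{-1}\xi$ then gives $\widehat{\overrightarrow{h}}(B^{-1}\xi)=\frac{1}{\sqrt{\det(iB)}}\widehat{\overrightarrow{f}}(B^{-1}\xi)\,\widehat{\overrightarrow{g}}(B^{-1}\xi)$. Feeding this through the reduction formula for $h$ and re-expressing $\widehat{\overrightarrow{f}}(B^{-1}\xi)$ and $\widehat{\overrightarrow{g}}(B^{-1}\xi)$ back in terms of $(L_{\mathcal{M}}f)(\xi)$ and $(L_{\mathcal{M}}g)(\xi)$, the four factors of $\sqrt{\det(iB)}$ telescope away and the phase factors collapse to $\eta_{\mathcal{M}}(\xi)\,\overline{\eta}_{\mathcal{M}}(\xi)^{2}=\overline{\eta}_{\mathcal{M}}(\xi)$, since $|\eta_{\mathcal{M}}(\xi)|^{2}=1$ (as $\xi^{T}DB^{-1}\xi$ is real). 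This produces exactly \eqref{eq:Conv1}.

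I expect the only real care to be needed in the bookkeeping of the two families of unimodular phase factors, namely $\lambda_{\mathcal{M}}$ on the time side and $\eta_{\mathcal{M}}$ on the transform side, together with the normalization constants $\sqrt{\det(iB)}$. There is no genuine analytic obstacle once the reduction to the ordinary Fourier transform is in place; the argument rests only on the non-singularity of $B$ and on the realness of the quadratic forms in the exponents, so the symplectic relations \eqref{ssads} are not even required for this particular identity.
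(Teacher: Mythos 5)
Your proposal is correct, and the identity it rests on, $(L_{\mathcal{M}}f)(\xi)=\frac{\eta_{\mathcal{M}}(\xi)}{\sqrt{\det(iB)}}\,\widehat{\overrightarrow{f}}(B^{-1}\xi)$, together with the cancellation $\overrightarrow{h}=\frac{1}{\sqrt{\det(iB)}}(\overrightarrow{f}\ast\overrightarrow{g})$ and the phase count $\eta_{\mathcal{M}}\overline{\eta}_{\mathcal{M}}^{\,2}=\overline{\eta}_{\mathcal{M}}$, checks out exactly; the four normalization factors do telescope as you claim, and you are right that the symplectic relations play no role here.

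The route differs from the paper's in organization rather than in substance. The paper never mentions the ordinary Fourier transform: it substitutes the definition of $f\star_{c}g$ into \eqref{def:LCT:L0}, interchanges the two integrals, and regroups the exponentials so that each inner integral is recognized as $(L_{\mathcal{M}}f)(\xi)$ or $(L_{\mathcal{M}}g)(\xi)$ directly, with $\overline{\eta}_{\mathcal{M}}(\xi)$ left over. Your version factors the LCT through the classical transform once and for all and then quotes the classical convolution theorem as a black box; this is essentially the paper's Fubini computation with the convolution-theorem part outsourced, and it buys a cleaner separation of the chirp bookkeeping (the $\lambda_{\mathcal{M}}$ cancellation on the time side, the $\eta_{\mathcal{M}}$ cancellation on the frequency side) from the analytic core. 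The paper's inline computation is more self-contained but obscures why the constants and phases come out right. One caveat applies equally to both arguments and is worth a remark if you write yours up: for $f,g\in L^2(\mathbb{R}^2)$ the convolution $\overrightarrow{f}\ast\overrightarrow{g}$ is only bounded and continuous, not necessarily in $L^1$ or $L^2$, so the convolution theorem (respectively the interchange of integrals) should be justified, e.g.\ by interpreting $\widehat{\overrightarrow{f}}\,\widehat{\overrightarrow{g}}\in L^1$ and $\overrightarrow{f}\ast\overrightarrow{g}$ as its inverse transform, or by first proving the identity on a dense subspace. The paper passes over this point silently, so your proposal is at the same level of rigor as the original.
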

\begin{proof}
According to (\ref{def:LCT:L0}), (\ref{def:Can-Conv1}) and (\ref{def:Can-Conv2}), we have
\begin{align*}
&(L_\mathcal{M}h)(\xi)\\
=&\frac{1}{\sqrt{\det{(iB)}}}\int_{\mathbb{R}^2} h(u)e^{i\pi u^TB^{-1}Au-2i\pi u^{T}B^{-1}\xi+i\pi \xi^{T}DB^{-1}\xi}{\rm{d}}u\\
=&\frac{1}{\sqrt{\det{(iB)}}}\int_{\mathbb{R}^2} \frac{\overline{\lambda}_{\mathcal{M}}(u)}{\sqrt{\det{(iB)}}}
\bigg(\int_{\mathbb{R}^2}\overrightarrow{f}(u-x)\overrightarrow{g}(x){\rm{d}}x\bigg)\\
 &\quad\times e^{i\pi u^TB^{-1}Au-2i\pi u^{T}B^{-1}\xi+i\pi \xi^{T}DB^{-1}\xi}{\rm{d}}u\\
=&\frac{1}{\det{(iB)}}\int_{\mathbb{R}^2}\int_{\mathbb{R}^2}f(u-x)e^{i\pi(u-x)^TB^{-1}A(u-x)}g(x)e^{i\pi x^TB^{-1}Ax}\\
 &\quad\times e^{-2i\pi u^{T}B^{-1}\xi+i\pi \xi^{T}DB^{-1}\xi}{\rm{d}}u{\rm{d}}x\\
=&\frac{\overline{\eta}_{\mathcal{M}}(\xi)}{\sqrt{\det{(iB)}}}\int_{\mathbb{R}^2}f(u-x)e^{i\pi(u-x)^TB^{-1}A(u-x)-2i\pi (u-x)^{T}B^{-1}\xi+i\pi \xi^{T}
  DB^{-1}\xi}{\rm{d}}u\\
 &\quad\times\frac{1}{\sqrt{\det{(iB)}}}\int_{\mathbb{R}^2}g(x)e^{i\pi x^TB^{-1}Ax-2i\pi x^{T}B^{-1}\xi+i\pi \xi^{T}DB^{-1}\xi}{\rm{d}}x\\
=&\overline{\eta}_{\mathcal{M}}(\xi)(L_\mathcal{M}f)(\xi)(L_\mathcal{M}g)(\xi),
\end{align*}
which completes the proof.
\end{proof}

We next give definitions of the two-dimensional discrete time non-separable LCT (2D-DT-NS-LCT) of a sequence in $l^2(\mathbb{Z}^2)$, the canonical convolution of a
sequence in $l^2(\mathbb{Z}^2)$ and a function in $L^2(\mathbb{R}^2)$, and periodic
functions, respectively.

\begin{Definition}\label{def:DLCT}
Let $s={s(k)}$ be a sequence in $l^2(\mathbb{Z}^2)$, i.e., $\sum_{k\in \mathbb{Z}^2}|s(k)|^2<+\infty$. The 2D-DT-NS-LCT of $s$ is defined by
\begin{equation}\label{def:DLCT:L0}
(L_{\mathcal{M}}s)(\xi)=
    \frac{1}{\sqrt{\det{(iB)}}}\sum_{k\in \mathbb{Z}^2}s(k)e^{i\pi k^TB^{-1}Ak-2i\pi k^{T}B^{-1}\xi+i\pi \xi^{T}DB^{-1}\xi}.
\end{equation}
\end{Definition}

\begin{Definition}\label{Se-Conv}
The canonical convolution operator $\star_{sd}$ of a sequence $s\in l^2(\mathbb{Z}^2)$ and a function $\phi\in L^2(\mathbb{R}^2)$ is defined as
\begin{equation}\label{def:Se-Conv}
h(t)=(s\star_{sd}\phi)(t)=\frac{\overline{\lambda}_{\mathcal{M}}(t)}{\sqrt{\det{(iB)}}}\sum_{k\in \mathbb{Z}^2}
\lambda_{\mathcal{M}}(k)s(k)\lambda_{\mathcal{M}}(t-k)\phi(t-k).
\end{equation}
\end{Definition}

\begin{Definition}
A function $\phi(t)$ is said to be periodic with periodicity matrix $M\in \mathbb{R}^{2\times 2}$, if
\begin{equation}\label{def:peri}
\phi(t+Mn)=\phi(t),\;\;{\mbox{for all }}t\in\mathbb{R}^{2} \mbox{and any } n\in \mathbb{Z}^{2},
\end{equation}
where $M$ is non-singular.
\end{Definition}


In the following, we derive a canonical convolution theorem of
a sequence in $l^2(\mathbb{Z}^2)$ and a function in $L^2(\mathbb{R}^2)$.

\begin{Lemma}\label{lem:Se-Conv}
For $s\in l^2(\mathbb{Z}^2)$, $\phi\in L^2(\mathbb{R}^2)$, let $h(t)=(s\star_{sd}\phi)(t)$. Then,
\begin{equation}\label{eq:Conv}
(L_\mathcal{M}h)(\xi)=\overline{\eta}_{\mathcal{M}}(\xi)(L_\mathcal{M}s)(\xi)(L_\mathcal{M}\phi)(\xi).
\end{equation}
Moreover, $|(L_{\mathcal{M}}s)(\xi)|$ is periodic with periodicity matrix $B$.
\end{Lemma}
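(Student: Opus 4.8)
The plan is to treat the two assertions separately, the first being a structural replica of the proof of Lemma~\ref{lem:Conv1} with one of the continuous integrals replaced by a discrete sum. First I would substitute the definition of $h(t)=(s\star_{sd}\phi)(t)$ from (\ref{def:Se-Conv}) into the function-LCT formula (\ref{def:LCT:L0}) for $(L_{\mathcal M}h)(\xi)$. The key cancellation is that the chirp $e^{i\pi u^{T}B^{-1}Au}=\lambda_{\mathcal M}(u)$ occurring in (\ref{def:LCT:L0}) exactly annihilates the prefactor $\overline{\lambda}_{\mathcal M}(u)$ carried by $h$, so that the integrand factorizes into the sequence weight $\lambda_{\mathcal M}(k)s(k)$ and the shifted-function weight $\lambda_{\mathcal M}(u-k)\phi(u-k)$ multiplied by the linear chirp $e^{-2i\pi u^{T}B^{-1}\xi+i\pi\xi^{T}DB^{-1}\xi}$.

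Next I would perform the change of variables $v=u-k$ in the integral. This splits $e^{-2i\pi u^{T}B^{-1}\xi}=e^{-2i\pi k^{T}B^{-1}\xi}\,e^{-2i\pi v^{T}B^{-1}\xi}$ and thereby decouples the $k$-summation from the $v$-integration. At this stage the raw expression is $\tfrac{1}{\det(iB)}$ times a product of (i) $\sum_{k}s(k)\,e^{i\pi k^{T}B^{-1}Ak-2i\pi k^{T}B^{-1}\xi}$, which is $\sqrt{\det(iB)}\,(L_{\mathcal M}s)(\xi)$ with its $\eta_{\mathcal M}(\xi)$ factor removed, and (ii) the analogous $v$-integral giving $(L_{\mathcal M}\phi)(\xi)$ with its $\eta_{\mathcal M}(\xi)$ factor removed. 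The crucial bookkeeping point is that the raw product carries only a single copy of $\eta_{\mathcal M}(\xi)=e^{i\pi\xi^{T}DB^{-1}\xi}$, whereas the two reconstructed LCTs together demand two copies; I would therefore insert $\eta_{\mathcal M}(\xi)\overline{\eta}_{\mathcal M}(\xi)=1$ and regroup the $\tfrac{1}{\det(iB)}=\tfrac{1}{\sqrt{\det(iB)}}\cdot\tfrac{1}{\sqrt{\det(iB)}}$ prefactor, leaving exactly $\overline{\eta}_{\mathcal M}(\xi)(L_{\mathcal M}s)(\xi)(L_{\mathcal M}\phi)(\xi)$, which is (\ref{eq:Conv}).

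For the periodicity claim I would evaluate $(L_{\mathcal M}s)(\xi+Bn)$ directly from (\ref{def:DLCT:L0}) for an arbitrary $n\in\mathbb{Z}^{2}$. The only $\xi$-dependence inside the summation lies in $e^{-2i\pi k^{T}B^{-1}\xi}$, and replacing $\xi$ by $\xi+Bn$ multiplies this term by $e^{-2i\pi k^{T}B^{-1}Bn}=e^{-2i\pi k^{T}n}=1$, since $k^{T}n\in\mathbb{Z}$; hence the whole sum is invariant under the shift. The surviving factor $\eta_{\mathcal M}(\xi)=e^{i\pi\xi^{T}DB^{-1}\xi}$ lies outside the sum and has modulus one for every real $\xi$ (its exponent is a real scalar), so taking absolute values yields $|(L_{\mathcal M}s)(\xi+Bn)|=|(L_{\mathcal M}s)(\xi)|$, the asserted $B$-periodicity.

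I do not anticipate a genuine conceptual obstacle, as the first part mirrors Lemma~\ref{lem:Conv1}. The two places warranting mild care are the interchange of summation and integration when substituting (\ref{def:Se-Conv}), which is justified by the $l^{2}$-summability of $s$ together with $\phi\in L^{2}(\mathbb{R}^{2})$ via Fubini and dominated convergence, and the careful accounting of the $\eta_{\mathcal M}$ and $1/\sqrt{\det(iB)}$ normalizing factors so that precisely one $\overline{\eta}_{\mathcal M}(\xi)$ remains.
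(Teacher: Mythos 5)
Your proposal is correct and follows essentially the same route as the paper: substitute the definition of $\star_{sd}$ into the LCT integral, cancel the chirps $\overline{\lambda}_{\mathcal M}(t)\lambda_{\mathcal M}(t)=1$, change variables to decouple the $k$-sum from the integral while tracking the single surviving $\overline{\eta}_{\mathcal M}(\xi)$ factor, and for the periodicity observe that the shift $\xi\mapsto\xi+Bn$ leaves the sum invariant (since $k^{T}n\in\mathbb{Z}$) and alters only a unimodular quadratic phase. No gaps.
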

\begin{proof}
According to (\ref{def:LCT:L0}), (\ref{def:DLCT:L0}) and (\ref{def:Se-Conv}), we have
\begin{align*}
&(L_\mathcal{M}h)(\xi)\\
=&\frac{1}{\sqrt{\det{(iB)}}}\int_{\mathbb{R}^2}(s\star_{sd}\phi)(t)e^{i\pi t^TB^{-1}At-2i\pi t^{T}B^{-1}\xi+i\pi \xi^{T}DB^{-1}\xi}{\rm{d}}t\\
=&\frac{1}{\det{(iB)}}\sum_{k\in \mathbb{Z}^2}\lambda_{\mathcal{M}}(k)s(k)\bigg(\int_{\mathbb{R}^2}\overline{\lambda}_{\mathcal{M}}(t)\lambda_{\mathcal{M}}(t-k)\phi(t-k)\\
&\quad \times e^{i\pi t^{T}B^{-1}At-2i\pi t^{T}B^{-1}\xi+i\pi \xi^{T}DB^{-1}\xi}{\rm{d}}t\bigg)\\
=&\frac{1}{\sqrt{\det{(iB)}}}\sum_{k\in\mathbb{Z}^2}s(k)e^{i\pi k^TB^{-1}Ak-2i\pi k^TB^{-1}\xi} \bigg(\frac{1}{\sqrt{\det{(iB)}}}\int_{\mathbb{R}^2}\phi(t)\\
&\quad \times e^{i\pi t^{T}B^{-1}At-2i\pi t^{T}B^{-1}\xi+i\pi \xi^{T}DB^{-1}\xi}{\rm{d}}t\bigg)\\
=&\overline{\eta}_{\mathcal{M}}(\xi)(L_\mathcal{M}s)(\xi)(L_\mathcal{M}\phi)(\xi).
\end{align*}
Furthermore, for all $\xi\in \mathbb{Z}^{2}$ and any $l\in \mathbb{Z}^{2}$, we get
\begin{align}\label{peri:1}
&(L_{\mathcal{M}}s)(\xi+Bl)\nonumber\\
=&\frac{1}{\sqrt{\det{(iB)}}}\sum_{k\in \mathbb{Z}^2}s(k)e^{i\pi k^TB^{-1}Ak-2i\pi k^{T}B^{-1}(\xi+Bl)+i\pi (\xi+Bl)^{T}DB^{-1}(\xi+Bl)}\nonumber\\
=&\frac{1}{\sqrt{\det{(iB)}}}\sum_{k\in \mathbb{Z}^2}s(k)e^{i\pi k^TB^{-1}Ak-2i\pi k^{T}B^{-1}\xi+i\pi \xi^{T}DB^{-1}\xi} e^{-2i\pi k^{T}l}\nonumber\\
 &\times e^{i\pi(Bl)^{T}DB^{-1}\xi+i\pi \xi^{T}Dl+i\pi (Bl)^{T}Dl}\nonumber\\
 =&(L_{\mathcal{M}}s)(\xi)e^{i\pi(Bl)^{T}DB^{-1}\xi+i\pi \xi^{T}Dl+i\pi (Bl)^{T}Dl},
\end{align}
where we use $e^{-2i\pi k^{T}l}=1$ in the last step.
Hence,
\[
|(L_{\mathcal{M}}s)(\xi+Bl)|=|(L_{\mathcal{M}}s)(\xi)|.
\]
This completes the proof.
\end{proof}

Similarly, we introduce the definition and property of the canonical convolution of two sequences in $l^2(\mathbb{Z}^2)$ associated with the 2D-DT-NS-LCT as follows.
\begin{Definition}\label{D-Conv}
Let $s={s(k)}\in l^2(\mathbb{Z}^2)$ and $c={c(k)}\in l^2(\mathbb{Z}^2)$. The canonical convolution operator $\star_{d}$ of two sequences $s$ and $c$ is defined by
\begin{equation}\label{def:D-Conv}
h(l)=(s\star_{d}c)(l)=\frac{\overline{\lambda}_{\mathcal{M}}(l)}{\sqrt{\det{(iB)}}}\sum_{k\in \mathbb{Z}^2}
\lambda_{\mathcal{M}}(k)s(k)\lambda_{\mathcal{M}}(l-k)c(l-k).
\end{equation}
\end{Definition}

\begin{Lemma}\label{lem:D-Conv}
For $s,\;c\in l^2(\mathbb{Z}^2)$, let $h(l)=(s\star_{d}c)(l)$. Then, we have
\begin{equation}\label{eq:D-Conv}
(L_\mathcal{M}h)(\xi)=\overline{\eta}_{\mathcal{M}}(\xi)(L_\mathcal{M}s)(\xi)(L_\mathcal{M}c)(\xi).
\end{equation}
\end{Lemma}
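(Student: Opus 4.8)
The plan is to establish \eqref{eq:D-Conv} by a direct computation that substitutes the definition \eqref{def:D-Conv} of $h=s\star_{d}c$ into the 2D-DT-NS-LCT \eqref{def:DLCT:L0} and then collapses the resulting double sum into a product of two transforms, following the same strategy used in the proofs of Lemma \ref{lem:Conv1} and Lemma \ref{lem:Se-Conv}. Concretely, I would begin from
\[
(L_\mathcal{M}h)(\xi)=\frac{1}{\sqrt{\det{(iB)}}}\sum_{l\in\mathbb{Z}^2}h(l)\,e^{i\pi l^TB^{-1}Al-2i\pi l^TB^{-1}\xi+i\pi\xi^TDB^{-1}\xi}
\]
and insert the expression for $h(l)$ from \eqref{def:D-Conv}.

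The first key simplification is the cancellation of the leading chirp: since $\overline{\lambda}_{\mathcal{M}}(l)=e^{-i\pi l^TB^{-1}Al}$, the factor $\overline{\lambda}_{\mathcal{M}}(l)$ in front of $h$ annihilates the quadratic term $e^{i\pi l^TB^{-1}Al}$ coming from the transform kernel. After this cancellation only the linear term $e^{-2i\pi l^TB^{-1}\xi}$ and the pure-$\xi$ factor $\eta_{\mathcal{M}}(\xi)=e^{i\pi\xi^TDB^{-1}\xi}$ survive, exactly as in the sequence--function case of Lemma \ref{lem:Se-Conv}. I would then interchange the two summations and perform the change of index $m=l-k$, which splits $e^{-2i\pi l^TB^{-1}\xi}=e^{-2i\pi k^TB^{-1}\xi}e^{-2i\pi m^TB^{-1}\xi}$ and decouples the double sum into the product of a sum over $k$ (involving $\lambda_{\mathcal{M}}(k)s(k)$) and a sum over $m$ (involving $\lambda_{\mathcal{M}}(m)c(m)$).

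The step that requires the most care is the bookkeeping of the quadratic phase factors $\eta_{\mathcal{M}}(\xi)$. After the split, each of the two factor sums must be re-expressed as $(L_\mathcal{M}s)(\xi)$ and $(L_\mathcal{M}c)(\xi)$; by \eqref{def:DLCT:L0} each such transform implicitly carries its own copy of $e^{i\pi\xi^TDB^{-1}\xi}$, so reinstating them introduces a factor $\overline{\eta}_{\mathcal{M}}(\xi)$ for each, while the computation already retains one surviving $\eta_{\mathcal{M}}(\xi)$. Using that $\xi^TDB^{-1}\xi$ is real, so that $|\eta_{\mathcal{M}}(\xi)|=1$ and $\eta_{\mathcal{M}}(\xi)\overline{\eta}_{\mathcal{M}}(\xi)=1$, these three factors combine as $\eta_{\mathcal{M}}(\xi)\,\overline{\eta}_{\mathcal{M}}(\xi)^2=\overline{\eta}_{\mathcal{M}}(\xi)$; together with the cancellation of $\det{(iB)}$ against the two factors $1/\sqrt{\det{(iB)}}$, this yields precisely \eqref{eq:D-Conv}.

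The only genuine technical point beyond this formal manipulation is justifying the interchange of the two summations, which follows by the same absolute-convergence argument implicit in Lemma \ref{lem:Se-Conv}, with the sum over $m$ now playing the role previously played by the integral against $\phi$. Since \eqref{eq:D-Conv} is structurally identical to \eqref{eq:Conv1} and \eqref{eq:Conv}, I expect the whole argument to be a short variant of those two proofs, with the $\eta_{\mathcal{M}}(\xi)$ tally being the one place where a conjugation slip could occur.
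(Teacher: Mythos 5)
Your proposal is correct and follows essentially the same route as the paper: substitute the definition of $\star_{d}$ into \eqref{def:DLCT:L0}, cancel the outer chirp $\overline{\lambda}_{\mathcal{M}}(l)$ against $e^{i\pi l^TB^{-1}Al}$, interchange the sums, shift the inner index, and track the $\eta_{\mathcal{M}}(\xi)$ and $\det(iB)$ factors exactly as you describe. Your tally $\eta_{\mathcal{M}}(\xi)\,\overline{\eta}_{\mathcal{M}}(\xi)^2=\overline{\eta}_{\mathcal{M}}(\xi)$ matches the paper's computation.
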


\begin{proof}
By (\ref{def:DLCT:L0}) and (\ref{def:D-Conv}), we have
\begin{align*}
&(L_\mathcal{M}h)(\xi)\\
=&\frac{1}{\sqrt{\det{(iB)}}}\sum_{l\in \mathbb{Z}^2}(s\star_{d}c)(l)e^{i\pi l^TB^{-1}Al-2i\pi l^{T}B^{-1}\xi+i\pi \xi^{T}DB^{-1}\xi}\\
=&\frac{1}{\det{(iB)}}\sum_{l\in \mathbb{Z}^2}\overline{\lambda}_{\mathcal{M}}(l)\bigg(\sum_{k\in \mathbb{Z}^2}
\lambda_{\mathcal{M}}(k)s(k)\lambda_{\mathcal{M}}(l-k)c(l-k)\bigg)\\
&\quad\times e^{i\pi l^TB^{-1}Al-2i\pi l^{T}B^{-1}\xi+i\pi\xi^{T}DB^{-1}\xi}\\
=&\frac{1}{\det{(iB)}}\sum_{k\in \mathbb{Z}^2}\lambda_{\mathcal{M}}(k)s(k)\bigg(\sum_{l\in \mathbb{Z}^2}\overline{\lambda}_{\mathcal{M}}(l)
\lambda_{\mathcal{M}}(l-k)c(l-k)\\
&\quad\times e^{i\pi l^TB^{-1}Al-2i\pi l^{T}B^{-1}\xi+i\pi\xi^{T}DB^{-1}\xi}\bigg)\\
=&\frac{1}{\sqrt{\det{(iB)}}}\sum_{k\in \mathbb{Z}^2}s(k)e^{i\pi k^TB^{-1}Ak-2i\pi k^{T}B^{-1}\xi}\bigg(\frac{1}{\sqrt{\det{(iB)}}}\sum_{l\in \mathbb{Z}^2}c(l)\\
&\quad\times e^{i\pi l^TB^{-1}Al-2i\pi l^{T}B^{-1}\xi+i\pi\xi^{T}DB^{-1}\xi}\bigg)\\
=&\overline{\eta}_{\mathcal{M}}(\xi)(L_\mathcal{M}s)(\xi)(L_\mathcal{M}c)(\xi),
\end{align*}
which completes the proof.
\end{proof}

On the basis of the above lemmas, we propose to explore the multivariate dynamical sampling in the $l^2(\mathbb{Z}^2)$ space associated with the 2D-DT-NS-LCT and shift-invariant spaces associated with the 2D-NS-LCT, respectively, in what follows.

\section{Multivariate dynamical sampling in $l^2(\mathbb{Z}^2)$ associated with 2D-DT-NS-LCT}\label{sec3}
In this section, we focus on the multivariate dynamical sampling in $l^2(\mathbb{Z}^2)$ (i.e., a sequence space) associated with the 2D-DT-NS-LCT. Specifically, we derive a necessary and sufficient condition under which signals (or sequences) in $l^2(\mathbb{Z}^2)$ can be stably reconstructed from their dynamical sampling values associated with the 2D-DT-NS-LCT.

Let $M$ be a $2\times 2$ non-singular real matrix (not necessarily integer matrix) with $m=|\det(M)|$, and $M\mathbb{Z}^2\triangleq\{Mn\; |\,n\in\mathbb{Z}^2\}$ be a lattice generated by $M$ \cite{DWY2019,XXW2020}.
Let us define $\mathbb{T}^2\triangleq\{[x_1,x_2]^T\; |\,x_1\in[0,1)\mbox{ and } x_2\in[0,1)\}\subset\mathbb{R}^2$.
The fundamental parallelepiped of $M\mathbb{Z}^2$ is defined as the region
\[
\mathcal{G}(M)\triangleq\left\{Mx\; |\,x\in\mathbb{T}^2\right\}.
\]
One can see that $\mathcal{G}(M)$ and its shifted copies (called the other lattice cells) constitute the whole real vector space $\mathbb{R}^2$, i.e.,
\begin{equation}\label{ssheng}
\bigcup_{n\in\mathbb{Z}^2}\left\{M(x+n)\; |\,x\in\mathbb{T}^2\right\}=\mathbb{R}^2.
\end{equation}

When $M$ is further an integer matrix, we define
\[
\mathcal{N}(M)\triangleq\left\{k\; |\,k=Mx, x\in\mathbb{T}^2, \mbox{and }k\in\mathbb{Z}^2\right\}.
\]
The number of elements in $\mathcal{N}(M)$ equals $m$ \cite{XXW2020}. Without loss of generality, let $\gamma_0=[0, 0]^{T}, \gamma_1, \cdots, \gamma_{m-1}$ be the $m$ distinct elements in $\mathcal{N}(M)$. It is clear that $\gamma_k+M\mathbb{Z}^2 (k=0,1,\cdots,m-1)$ constitute the whole integer vector space $\mathbb{Z}^2$, i.e., $\bigcup_{k=0}^{m-1}\{\gamma_k+M\mathbb{Z}^2\}=\mathbb{Z}^2$. Analogously, let $M^T\mathbb{Z}^2\triangleq\{M^Tn\; |\,n\in\mathbb{Z}^2\}$ be a lattice generated by $M^T$, and $\eta_0=[0, 0]^{T}, \eta_1, \cdots, \eta_{m-1}$ be the $m$ distinct elements in $\mathcal{N}(M^T)$. Obviously, $\bigcup_{j=0}^{m-1}\{\eta_j+M^T\mathbb{Z}^2\}=\mathbb{Z}^2$.

Let
\[
S_M:\;l^2(\mathbb{Z}^2)\rightarrow l^2(\mathbb{Z}^2)
\]
be the subsampling operator by some fixed dilation matrix $M$ (i.e., a $2\times 2$ non-singular integer matrix), that is to say, for any $c\in l^2(\mathbb{Z}^2),$ and $k\in \mathbb{Z}^2$,
\[
(S_M c)(k)=c(M^Tk).
\]

In the beginning, we introduce some propositions and lemmas, which are useful for tackling the multivariate dynamical sampling problem in
$l^2(\mathbb{Z}^2)$ associated with the 2D-DT-NS-LCT.

\begin{Proposition}[\cite{XXW2020}]\label{ortho}
As stated above, let $\{\gamma_k\}_{k=0}^{m-1}\in\mathbb{Z}^2$ and $\{\eta_j\}_{j=0}^{m-1}\in\mathbb{Z}^2$ be the $m$ distinct elements of
$\mathcal{N}(M)$ and $\mathcal{N}(M^T)$, respectively, with $\gamma_0=\eta_0=[0, 0]^{T}$. Then,
\begin{equation}\label{ortho1}
\sum_{k=0}^{m-1}e^{-i2\pi\eta_j^TM^{-1}\gamma_k}=m\delta_j,\;\;j=0, 1, \cdots, m-1,
\end{equation}
where $m=|\det(M)|=|\det(M^T)|$, and
\begin{equation*}
\delta_j=
\begin{cases}
1,& j=0,\\
0,& j\ne 0.
\end{cases}
\end{equation*}
\end{Proposition}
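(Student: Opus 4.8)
The plan is to interpret the exponential sum through character orthogonality on the finite abelian group $\mathbb{Z}^2/M\mathbb{Z}^2$. For each fixed $j$, define $\chi_j(\gamma)=e^{-i2\pi\eta_j^TM^{-1}\gamma}$ on $\mathbb{Z}^2$. First I would verify that $\chi_j$ is a homomorphism from $(\mathbb{Z}^2,+)$ into the unit circle, which is immediate since $\eta_j^TM^{-1}(\gamma+\gamma')=\eta_j^TM^{-1}\gamma+\eta_j^TM^{-1}\gamma'$, and that it descends to a well-defined character on the quotient $\mathbb{Z}^2/M\mathbb{Z}^2$: replacing $\gamma$ by $\gamma+Mn$ with $n\in\mathbb{Z}^2$ changes the exponent by $\eta_j^TM^{-1}Mn=\eta_j^Tn\in\mathbb{Z}$, so $\chi_j(\gamma+Mn)=\chi_j(\gamma)$. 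The sum in (\ref{ortho1}) is then exactly $\sum_{k=0}^{m-1}\chi_j(\gamma_k)$, i.e. the sum of $\chi_j$ over a full set of coset representatives.

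Next I would split into two cases. When $j=0$, since $\eta_0=[0,0]^T$ the character $\chi_0$ is identically $1$, so the sum equals the number $m$ of representatives in $\mathcal{N}(M)$ (a fact already recorded in the text preceding the statement), recovering $m\delta_0=m$. When $j\neq 0$, the goal is to show that $\chi_j$ is a nontrivial character, whence summing it over the group yields $0$. The standard orthogonality argument applies: if there exists some $\gamma_{k_0}$ with $\chi_j(\gamma_{k_0})\neq 1$, then because translation by $\gamma_{k_0}$ merely permutes the cosets and $\chi_j$ is multiplicative, one has $\sum_k\chi_j(\gamma_k)=\sum_k\chi_j(\gamma_k+\gamma_{k_0})=\chi_j(\gamma_{k_0})\sum_k\chi_j(\gamma_k)$, which forces $\sum_k\chi_j(\gamma_k)=0$.

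The main obstacle, and the crux of the proof, is establishing nontriviality of $\chi_j$ for $j\neq 0$, which rests on the duality between the lattices generated by $M$ and $M^T$. The character $\chi_j$ is trivial on $\mathbb{Z}^2$ precisely when $\eta_j^TM^{-1}\gamma\in\mathbb{Z}$ for every $\gamma\in\mathbb{Z}^2$; testing against the standard basis vectors shows this holds iff the row vector $\eta_j^TM^{-1}$ has integer entries, i.e. iff $(M^T)^{-1}\eta_j\in\mathbb{Z}^2$, equivalently $\eta_j\in M^T\mathbb{Z}^2$. Since $\eta_1,\dots,\eta_{m-1}$ are distinct nonzero coset representatives of $\mathbb{Z}^2/M^T\mathbb{Z}^2$, none of them lies in $M^T\mathbb{Z}^2$ when $j\neq 0$, so $\chi_j$ is indeed nontrivial and the sum vanishes. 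Combining the two cases gives $\sum_{k=0}^{m-1}e^{-i2\pi\eta_j^TM^{-1}\gamma_k}=m\delta_j$, as claimed.
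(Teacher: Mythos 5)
Your proof is correct and complete. Note, however, that the paper itself offers no proof of this proposition: it is imported verbatim from the cited reference \cite{XXW2020}, so there is no in-paper argument to compare against. Your route --- viewing $\gamma\mapsto e^{-i2\pi\eta_j^TM^{-1}\gamma}$ as a character on the finite quotient group $\mathbb{Z}^2/M\mathbb{Z}^2$, checking it is well defined on cosets, and reducing the identity to orthogonality of characters --- is the standard way to establish such generalized DFT orthogonality relations, and every step checks out: the well-definedness uses $\eta_j^TM^{-1}(Mn)=\eta_j^Tn\in\mathbb{Z}$; the translation trick legitimately forces the sum of a nontrivial character to vanish because $\{\gamma_k+\gamma_{k_0}\}$ is again a complete set of coset representatives and $\chi_j$ is constant on cosets; and the crux, nontriviality of $\chi_j$ for $j\neq0$, is correctly reduced to $\eta_j\notin M^T\mathbb{Z}^2$, which holds precisely because $\eta_0=[0,0]^T$ and the $\eta_j$ represent distinct cosets of $M^T\mathbb{Z}^2$. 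The only facts you borrow --- that $\mathcal{N}(M)$ has exactly $m$ elements and forms a full set of residues --- are explicitly recorded in the paper just before the statement, so the argument is self-contained.
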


Based on Proposition \ref{ortho}, we next obtain the Poisson summation formula in the 2D-DT-NS-LCT domain as follows.
\begin{Lemma}\label{lem:Poi}
Let a sequence $c\in l^2(\mathbb{Z}^2)$. Then, we have
\begin{align}
&\frac{m}{\sqrt{\det(iB)}}\{\mathcal{F}[S_M (c\lambda_{\mathcal{M}})]\}\big(\xi\big)\nonumber\\
=&\sum_{k=0}^{m-1}\overline{\eta}_\mathcal{M}[BM^{-1}(\xi+\gamma_k)](L_{\mathcal{M}}c)[BM^{-1}(\xi+\gamma_k)],\label{eq:Poi}
\end{align}
where $\mathcal{F}$ stands for the discrete time Fourier transform on $\mathbb{T}^2$, i.e., for any $c\in l^2(\mathbb{Z}^2)$,
\[
(\mathcal{F}c)(\xi)=\sum_{k\in \mathbb{Z}^2}c(k)e^{-2i\pi k^T\xi}, \;\,\xi\in\mathbb{T}^2.
\]
\end{Lemma}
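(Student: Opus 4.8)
The plan is to expand both sides explicitly and reduce the identity to the orthogonality relation of Proposition \ref{ortho}. First I would observe that the combination $\overline{\eta}_{\mathcal{M}}(\xi)(L_{\mathcal{M}}c)(\xi)$ appearing on the right-hand side simplifies considerably: by Definition \ref{def:DLCT} the factor $e^{i\pi\xi^T DB^{-1}\xi}$ inside $(L_{\mathcal{M}}c)(\xi)$ is cancelled exactly by $\overline{\eta}_{\mathcal{M}}(\xi)=e^{-i\pi\xi^T DB^{-1}\xi}$, leaving
\[
\overline{\eta}_{\mathcal{M}}(\xi)(L_{\mathcal{M}}c)(\xi)=\frac{1}{\sqrt{\det(iB)}}\sum_{n\in\mathbb{Z}^2}(c\lambda_{\mathcal{M}})(n)e^{-2i\pi n^T B^{-1}\xi}.
\]
Writing $d:=c\lambda_{\mathcal{M}}$ for brevity, this is just $\frac{1}{\sqrt{\det(iB)}}(\mathcal{F}d)(B^{-1}\xi)$, where $\mathcal{F}d$ is read as the $\mathbb{Z}^2$-periodic extension of the discrete time Fourier transform.

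Next I would substitute $\xi\mapsto BM^{-1}(\xi+\gamma_k)$ so that $B^{-1}$ applied to this argument equals $M^{-1}(\xi+\gamma_k)$, and then sum over $k=0,\ldots,m-1$. Pulling the finite sum over $k$ inside the sum over $n$ (always legitimate, since the $k$-sum is finite) turns the right-hand side into
\[
\frac{1}{\sqrt{\det(iB)}}\sum_{n\in\mathbb{Z}^2}d(n)e^{-2i\pi n^T M^{-1}\xi}\sum_{k=0}^{m-1}e^{-2i\pi n^T M^{-1}\gamma_k}.
\]
The crux is to evaluate the inner character sum $\sum_{k=0}^{m-1}e^{-2i\pi n^T M^{-1}\gamma_k}$ for an arbitrary $n\in\mathbb{Z}^2$, which is where I would invoke Proposition \ref{ortho}.

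Since the cosets $\{\eta_j+M^T\mathbb{Z}^2\}_{j=0}^{m-1}$ partition $\mathbb{Z}^2$, I would write $n=\eta_j+M^T p$ with $p\in\mathbb{Z}^2$ and note that $n^T M^{-1}\gamma_k=\eta_j^T M^{-1}\gamma_k+p^T\gamma_k$; because $p^T\gamma_k\in\mathbb{Z}$ the second term contributes a trivial phase, so the inner sum collapses to $\sum_{k=0}^{m-1}e^{-2i\pi\eta_j^T M^{-1}\gamma_k}=m\delta_j$ by (\ref{ortho1}). Thus only the terms with $n\in M^T\mathbb{Z}^2$ survive, each weighted by $m$. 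Finally I would set $n=M^T l$, use $(M^T l)^T M^{-1}\xi=l^T\xi$ to collapse the exponential, and recognise the surviving sum as $\frac{m}{\sqrt{\det(iB)}}\sum_{l\in\mathbb{Z}^2}d(M^T l)e^{-2i\pi l^T\xi}=\frac{m}{\sqrt{\det(iB)}}(\mathcal{F}[S_M d])(\xi)$, which is precisely the left-hand side.

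The main obstacle is the character-sum step: one must justify the coset decomposition of $\mathbb{Z}^2$ modulo $M^T\mathbb{Z}^2$ and the integrality of $p^T\gamma_k$ carefully so that the general sum genuinely reduces to the orthogonality relation, and one must keep meticulous track of the matrix transposes $M$ versus $M^T$ (the subsampling uses $M^T$ while the orthogonality relation pairs $\gamma_k$ with $\eta_j$) throughout the manipulation.
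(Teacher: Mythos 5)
Your proposal is correct and follows essentially the same route as the paper's own proof: expand the right-hand side via Definition \ref{def:DLCT}, cancel the quadratic chirp $e^{i\pi\xi^TDB^{-1}\xi}$ against $\overline{\eta}_{\mathcal{M}}$, interchange the finite $k$-sum with the $n$-sum, decompose $\mathbb{Z}^2$ into the cosets $\eta_j+M^T\mathbb{Z}^2$, and apply Proposition \ref{ortho} so that only the $j=0$ coset survives with weight $m$, yielding $\frac{m}{\sqrt{\det(iB)}}\{\mathcal{F}[S_M(c\lambda_{\mathcal{M}})]\}(\xi)$. The coset-decomposition and integrality details you flag as the "main obstacle" are handled in the paper exactly as you describe, so no gap remains.
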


\begin{proof}
Combining (\ref{def:DLCT:L0}) and (\ref{ortho1}), we have
\begin{align}
&\sum_{k=0}^{m-1}\overline{\eta}_\mathcal{M}[BM^{-1}(\xi+\gamma_k)](L_{\mathcal{M}}c)[BM^{-1}(\xi+\gamma_k)]\nonumber\\
=&\frac{1}{\sqrt{\det{(iB)}}}\sum_{k=0}^{m-1}e^{-i\pi[BM^{-1}(\xi+\gamma_k)]^{T}DM^{-1}(\xi+\gamma_k)}\sum_{n\in \mathbb{Z}^2}c(n)e^{i\pi n^TB^{-1}An}\nonumber\\
&\quad\times e^{-2i\pi n^{T}B^{-1}[BM^{-1}(\xi+\gamma_k)]}e^{i\pi [BM^{-1}(\xi+\gamma_k)]^{T}DB^{-1}[BM^{-1}(\xi+\gamma_k)]}\nonumber\\
=&\frac{1}{\sqrt{\det{(iB)}}}\sum_{k=0}^{m-1}\sum_{n\in \mathbb{Z}^2}c(n)e^{i\pi n^TB^{-1}An}e^{-2i\pi n^{T}M^{-1}(\xi+\gamma_k)}\nonumber\\
=&\frac{1}{\sqrt{\det{(iB)}}}\sum_{n\in \mathbb{Z}^2}c(n)e^{i\pi n^TB^{-1}An}e^{-2i\pi n^{T}M^{-1}\xi}\sum_{k=0}^{m-1}e^{-2i\pi n^{T}M^{-1}\gamma_k}\nonumber\\
=&\frac{1}{\sqrt{\det{(iB)}}}\sum_{n^{\prime}\in\mathbb{Z}^2,n=M^Tn^{\prime}}c(n)e^{i\pi n^TB^{-1}An}e^{-2i\pi n^{T}M^{-1}\xi}\sum_{k=0}^{m-1}e^{-2i\pi n^{T}M^{-1}\gamma_k}\nonumber\\
&\quad+\frac{1}{\sqrt{\det{(iB)}}}\sum_{j=1}^{m-1}\sum_{n^{\prime}\in\mathbb{Z}^2,n=M^Tn^{\prime}+\eta_j}c(n)e^{i\pi n^TB^{-1}An}e^{-2i\pi n^{T}M^{-1}\xi}\nonumber\\
&\quad\quad\times\sum_{k=0}^{m-1}e^{-2i\pi n^{T}M^{-1}\gamma_k}\nonumber\\
=&\frac{m}{\sqrt{\det{(iB)}}}\sum_{n^{\prime}\in\mathbb{Z}^2}c(M^Tn^{\prime})e^{i\pi(M^Tn^{\prime})^TB^{-1}A(M^Tn^{\prime})}e^{-2i\pi [M^Tn^{\prime}]^{T}M^{-1}\xi}\nonumber\\
=&\frac{m}{\sqrt{\det{(iB)}}}\{\mathcal{F}[S_M (c\lambda_{\mathcal{M}})]\}\big(\xi\big).\label{eq:Poi1}
\end{align}
This completes the proof.
\end{proof}

Let $a\in l^2(\mathbb{Z}^2)$ be the kernel of an evolution operator, $a^j=\underbrace{a\star_{d}\cdots\star_{d} a}_{j}, j=1,2,\cdots,m-1$, and $y_0(k)=c(M^Tk), y_j(k)=(a^j\star_{d} c)(M^Tk), j=1,2,\cdots,m-1$. Then, based on Lemma~\ref{lem:Poi} and Lemma~\ref{lem:D-Conv}, we have the following theorem.

\begin{Theorem}\label{thm:dynam}
Let $a$ be a sequence with $(L_\mathcal{M}a)(\xi)\in L^{\infty}(\mathbb{R}^2)$. Suppose that $e_{j}^{k}=\overline{\eta}_{\mathcal{M}}^{j+1}[BM^{-1}(\xi+\gamma_k)], k=0,1,\cdots,m-1$,
and $\mathcal{A}_{\mathcal{M}}(\xi)$ is denoted as
\begin{equation}
\tiny{\mathcal{A}_{\mathcal{M}}(\xi)=\left[
                     \begin{array}{cccc}
                       e_0^0 & e_0^1 & \cdots & e_0^{m-1} \\
                       e_1^0(L_\mathcal{M}a)(BM^{-1}\xi) & e_1^1(L_\mathcal{M}a)[BM^{-1}(\xi+\gamma_1)] & \cdots & e_1^{m-1}(L_\mathcal{M}a)[BM^{-1}(\xi+\gamma_{m-1})] \\
                       \vdots & \vdots & \vdots & \vdots \\
                       e_{m-1}^0(L_\mathcal{M}a)^{m-1}(BM^{-1}\xi) & e_{m-1}^1(L_\mathcal{M}a)^{m-1}[BM^{-1}(\xi+\gamma_1)] & \cdots
                             & e_{m-1}^{m-1}(L_\mathcal{M}a)^{m-1}[BM^{-1}(\xi+\gamma_{m-1})]\\
                     \end{array}
                   \right]}.
\end{equation}
Any $c\in l^2(\mathbb{Z}^2)$ can be recovered in a stable way, i.e., the inverse of $\mathcal{A}_{\mathcal{M}}(\xi)$ is bounded, from the dynamical sampling
measurements $y_0(k)=c(M^Tk), y_j(k)=(a^j\star_{d} c)(M^Tk), j=1,2,\cdots,m-1,\; k\in\mathbb{Z}^2$, if and only if there exists $\alpha>0$ so that the set $\{\xi\; |\,|\det(\mathcal{A}_{\mathcal{M}}(\xi))|<\alpha\}$ has zero measure.
\end{Theorem}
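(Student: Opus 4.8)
The plan is to turn the measurements into an $\xi$-indexed family of $m\times m$ linear systems governed by $\mathcal{A}_{\mathcal{M}}(\xi)$, and then to read off stable invertibility from the determinant. First I would observe that the data determine, and are determined by, the subsampled sequences $S_M(u_j\lambda_{\mathcal{M}})$, where $u_0=c$ and $u_j=a^j\star_d c$: since $y_j(k)=u_j(M^Tk)$ and $\lambda_{\mathcal{M}}(M^Tk)$ is explicitly known, multiplying $y_j(k)$ by $\lambda_{\mathcal{M}}(M^Tk)$ recovers $S_M(u_j\lambda_{\mathcal{M}})$ reversibly. Applying the Poisson summation formula of Lemma~\ref{lem:Poi} to each $u_j$ and then iterating the canonical convolution theorem (Lemma~\ref{lem:D-Conv}), which gives $(L_{\mathcal{M}}u_j)(\zeta)=\overline{\eta}_{\mathcal{M}}(\zeta)^{j}(L_{\mathcal{M}}a)(\zeta)^{j}(L_{\mathcal{M}}c)(\zeta)$, yields
\[
\frac{m}{\sqrt{\det(iB)}}\{\mathcal{F}[S_M(u_j\lambda_{\mathcal{M}})]\}(\xi)=\sum_{k=0}^{m-1}e_j^{k}\,(L_{\mathcal{M}}a)^{j}[BM^{-1}(\xi+\gamma_k)]\,(L_{\mathcal{M}}c)[BM^{-1}(\xi+\gamma_k)]
\]
for $j=0,1,\dots,m-1$. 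Stacking these $m$ identities gives the vector equation $\mathbf{y}(\xi)=\mathcal{A}_{\mathcal{M}}(\xi)\mathbf{c}(\xi)$, where $\mathbf{y}(\xi)$ is computed from the measurements and $\mathbf{c}(\xi)=\big[(L_{\mathcal{M}}c)(BM^{-1}(\xi+\gamma_k))\big]_{k=0}^{m-1}$ collects the unknown aliased values of $L_{\mathcal{M}}c$.

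Next I would identify stable recovery of $c$ with uniform solvability of this family. As $\xi$ ranges over $\mathbb{T}^2$ and $k$ over $0,\dots,m-1$, the nodes $BM^{-1}(\xi+\gamma_k)$ sweep out a fundamental domain of $L_{\mathcal{M}}c$ modulo the period matrix $B$ (recall $\bigcup_k\{\gamma_k+M\mathbb{Z}^2\}=\mathbb{Z}^2$ and the quasi-periodicity of Lemma~\ref{lem:Se-Conv}). Together with the Parseval/isometry identity for the 2D-DT-NS-LCT — which reduces, via the chirp-modulation $c\mapsto c\lambda_{\mathcal{M}}$ and the scaling $\xi\mapsto B^{-1}\xi$, to the isometry of $\mathcal{F}$ on $\mathbb{T}^2$ — this shows that $\|c\|_{l^2(\mathbb{Z}^2)}$ is comparable to $\|\mathbf{c}(\cdot)\|_{L^2(\mathbb{T}^2)}$, and likewise for the data $\mathbf{y}$. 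Consequently $c$ is stably recoverable if and only if $\mathbf{y}(\xi)=\mathcal{A}_{\mathcal{M}}(\xi)\mathbf{c}(\xi)$ can be solved for $\mathbf{c}(\xi)$ with a bound uniform in $\xi$, i.e. $\operatorname*{ess\,sup}_{\xi}\|\mathcal{A}_{\mathcal{M}}(\xi)^{-1}\|<\infty$, which is exactly the stability clause in the statement.

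It then remains to prove that this uniform inverse bound is equivalent to the determinant condition, and this is where the hypothesis $(L_{\mathcal{M}}a)\in L^\infty(\mathbb{R}^2)$ enters. Since $\eta_{\mathcal{M}}$ is unimodular, so is each $e_j^k$, and hence every entry of $\mathcal{A}_{\mathcal{M}}(\xi)$ is bounded by $\max(1,\|L_{\mathcal{M}}a\|_\infty^{m-1})$; thus $\|\mathcal{A}_{\mathcal{M}}(\xi)\|\le C$ for some constant $C$, uniformly in $\xi$. Writing $\sigma_{\min}(\xi)$ for the smallest singular value of $\mathcal{A}_{\mathcal{M}}(\xi)$ and using $\prod_i\sigma_i=|\det\mathcal{A}_{\mathcal{M}}(\xi)|$, I obtain
\[
\sigma_{\min}(\xi)^{m}\le|\det\mathcal{A}_{\mathcal{M}}(\xi)|\le\sigma_{\min}(\xi)\,\|\mathcal{A}_{\mathcal{M}}(\xi)\|^{m-1}\le C^{m-1}\sigma_{\min}(\xi).
\]
Since $\|\mathcal{A}_{\mathcal{M}}(\xi)^{-1}\|=1/\sigma_{\min}(\xi)$, these two-sided bounds give the claimed equivalence: for the forward implication, $K:=\operatorname*{ess\,sup}_{\xi}\|\mathcal{A}_{\mathcal{M}}(\xi)^{-1}\|<\infty$ forces $\sigma_{\min}(\xi)\ge 1/K$ and hence $|\det\mathcal{A}_{\mathcal{M}}(\xi)|\ge K^{-m}$ a.e., so $\{\xi:|\det\mathcal{A}_{\mathcal{M}}(\xi)|<\tfrac12 K^{-m}\}$ has measure zero; for the converse, $|\det\mathcal{A}_{\mathcal{M}}(\xi)|\ge\alpha$ a.e. forces $\sigma_{\min}(\xi)\ge\alpha/C^{m-1}$ and therefore $\|\mathcal{A}_{\mathcal{M}}(\xi)^{-1}\|\le C^{m-1}/\alpha$ a.e.

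The algebra of the first step is routine. The \emph{delicate point} is the second step: one must verify that the sequence $c$ is faithfully encoded, \emph{with matching stability constants}, by the pointwise systems, which requires the correct isometry identity for the 2D-DT-NS-LCT together with the exact tiling of the frequency domain by the nodes $BM^{-1}(\xi+\gamma_k)$. The other indispensable use of the hypotheses is the uniform bound $\|\mathcal{A}_{\mathcal{M}}(\xi)\|\le C$ furnished by $(L_{\mathcal{M}}a)\in L^\infty$: without it the determinant alone cannot control $\sigma_{\min}$, and the ``$\Leftarrow$'' direction breaks down.
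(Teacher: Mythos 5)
Your proposal is correct, and its first step is exactly the paper's argument: combine Lemma~\ref{lem:Poi} with Lemma~\ref{lem:D-Conv} to turn the subsampled, chirp-corrected data into the $\xi$-indexed family of systems $\mathcal{Y}(\xi)=\mathcal{A}_{\mathcal{M}}(\xi)\mathcal{C}(\xi)$ (your $\overline{\eta}_{\mathcal{M}}^{\,j}(L_{\mathcal{M}}a)^j$ formula for $L_{\mathcal{M}}(a^j\star_d c)$ and the resulting exponents $e_j^k=\overline{\eta}_{\mathcal{M}}^{\,j+1}$ match equation (\ref{cao}) of the paper). Where you diverge is that the paper stops there: after writing (\ref{eq:dyna2}) it simply remarks that $\mathcal{C}(\xi)$ can be solved for if $\mathcal{A}_{\mathcal{M}}(\xi)$ is invertible, and never actually connects bounded invertibility to the stated condition that $\{\xi:|\det\mathcal{A}_{\mathcal{M}}(\xi)|<\alpha\}$ be null for some $\alpha>0$. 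Your two additional steps supply precisely what is missing: the identification $\|\mathbf{c}(\cdot)\|_{L^2(\mathbb{T}^2)}^2=\tfrac{m}{|\det B|}\|c\|_{l^2}^2$ via the tiling of a $B$-fundamental domain by the nodes $BM^{-1}(\xi+\gamma_k)$ (the Parseval computation you need appears in the paper only later, inside the proof of Theorem~\ref{thm:Riesz}), and the singular-value sandwich $\sigma_{\min}^m\le|\det\mathcal{A}_{\mathcal{M}}|\le C^{m-1}\sigma_{\min}$, which is where the hypothesis $(L_{\mathcal{M}}a)\in L^\infty$ is genuinely used and which the paper never invokes. So your write-up is not a different route but the same route carried to completion; it proves the ``if and only if'' that the paper only asserts.
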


\begin{proof}
Combining Lemma~\ref{lem:D-Conv} and Lemma~\ref{lem:Poi}, for any $j=1,2,\cdots,m-1$, we get
\begin{align}
&\frac{m}{\sqrt{\det(iB)}}\{\mathcal{F}[y_jS_{M}(\lambda_{\mathcal{M}})]\}\big(\xi\big)\nonumber\\
=&\frac{m}{\sqrt{\det(iB)}}\{\mathcal{F}[S_M((a^j\star_{d} c)\lambda_{\mathcal{M}})]\}\big(\xi\big)\nonumber\\
=&\sum_{k=0}^{m-1}\overline{\eta}_\mathcal{M}[BM^{-1}(\xi+\gamma_k)](L_{\mathcal{M}}(a^j\star_{d} c))[BM^{-1}(\xi+\gamma_k)] \label{cao}\\
=&\sum_{k=0}^{m-1}\overline{\eta}_\mathcal{M}^2[BM^{-1}(\xi+\gamma_k)](L_{\mathcal{M}}(a^j))[BM^{-1}(\xi+\gamma_k)](L_{\mathcal{M}}c)[BM^{-1}(\xi+\gamma_k)]\nonumber\\
=&\sum_{k=0}^{m-1}\overline{\eta}_\mathcal{M}^{j+1}[BM^{-1}(\xi+\gamma_k)](L_{\mathcal{M}}a)^{j}[BM^{-1}(\xi+\gamma_k)](L_{\mathcal{M}}c)[BM^{-1}(\xi+\gamma_k)].\nonumber
\end{align}
Let
\begin{equation}\label{eq:dyna1}
\mathcal{Y}(\xi)=\left[
                     \begin{array}{cccc}
                       \frac{m}{\sqrt{\det(iB)}}\{\mathcal{F}[y_0S_{M}(\lambda_{\mathcal{M}})]\}\big(\xi\big)\\
                       \frac{m}{\sqrt{\det(iB)}}\{\mathcal{F}[y_1S_{M}(\lambda_{\mathcal{M}})]\}\big(\xi\big)\\
                       \vdots  \\
                       \frac{m}{\sqrt{\det(iB)}}\{\mathcal{F}[y_{m-1}S_{M}(\lambda_{\mathcal{M}})]\}\big(\xi\big) \\
                     \end{array}
                   \right],
\end{equation}
and
\[
\mathbf{\mathcal{C}}(\xi)=\left[
         \begin{array}{c}
           (L_\mathcal{M}c)(BM^{-1}\xi) \\
           (L_\mathcal{M}c)[BM^{-1}(\xi+\gamma_1)] \\
           \vdots \\
           (L_\mathcal{M}c)[BM^{-1}(\xi+\gamma_{m-1})] \\
         \end{array}
       \right].
\]
In short notation, from (\ref{cao}), we can equivalently rewrite (\ref{eq:dyna1}) as
\begin{equation}\label{eq:dyna2}
\mathcal{Y}(\xi)=\mathcal{A}_{\mathcal{M}}(\xi)\mathcal{C}(\xi).
\end{equation}
Thus, we can solve this equation (\ref{eq:dyna2}) with respect to $\mathcal{C}(\xi)$ (which we use to produce $c$), if $\mathcal{A}_{\mathcal{M}}(\xi)$ is invertible.
\end{proof}

\section{Multivariate dynamical sampling in shift-invariant spaces associated with 2D-NS-LCT}\label{sec:shift}

In this section, we naturally study the multivariate dynamical sampling in shift-invariant spaces associated with the 2D-NS-LCT. Shift-invariant spaces are typical spaces of functions considered in sampling theory.

First, we derive a sufficient and necessary condition under which a function $\phi(t)\in L^2(\mathbb{R}^2)$ can be a generator for a shift-invariant space $V$ associated with the 2D-NS-LCT.

\begin{Theorem}\label{thm:Riesz}
Let a sequence $s\in l^2(\mathbb{Z}^2)$ and a function $\phi\in L^2(\mathbb{R}^2)$. Assume that the chirp-modulated subspace of $L^2(\mathbb{R}^2)$ is given by
\[
V(\phi)=\overline{\{f\in L^2(\mathbb{R}^2): f(t)=(s\star_{sd}\phi)(t)\}}.
\]
Then, $\{e^{-2i\pi(t-k)^TB^{-1}(t-k)}\phi(t-k)\}$ is a Riesz basis for $V(\phi)$, if and only if there exist two constants $\eta_1,\eta_2>0$ such that
\begin{equation}
 \eta_1\le\sum_{k\in \mathbb{Z}^2}\big|(L_\mathcal{M}(\phi))(t+Bk)\big|^2\le \eta_2
\end{equation}
for all $t\in \left\{Bx \;| \,x\in \mathbb{T}^2\right\}$.
\end{Theorem}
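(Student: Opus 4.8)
The plan is to reduce the Riesz-basis property to a pair of two-sided stability inequalities for the synthesis map and then transport everything into the 2D-NS-LCT domain, where the hypothesis on the bracket sum $\sum_k|(L_\mathcal{M}\phi)(t+Bk)|^2$ becomes transparent. First I would record the standard reformulation. Writing out $s\star_{sd}\phi$ via Definition~\ref{Se-Conv} exhibits it as $\sum_{k\in\mathbb{Z}^2}s(k)g_k$, where the $g_k$ are exactly the chirp-modulated translates generating the sum, so that $V(\phi)=\overline{\operatorname{span}}\{g_k\}$. Consequently $\{g_k\}$ is a Riesz basis for $V(\phi)$ precisely when the synthesis map $s\mapsto s\star_{sd}\phi$ is bounded above and below on $l^2(\mathbb{Z}^2)$, i.e.\ when there exist $0<\eta_1\le\eta_2<\infty$ with
\[
\eta_1\sum_{k\in\mathbb{Z}^2}|s(k)|^2\le\big\|\,s\star_{sd}\phi\,\big\|_{L^2(\mathbb{R}^2)}^2\le\eta_2\sum_{k\in\mathbb{Z}^2}|s(k)|^2
\]
for every finitely supported $s$ (unimodular and constant normalization factors in the identification of $g_k$ do not change the optimal constants).

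Next I would pass to the LCT side. Setting $h=s\star_{sd}\phi$ and using the unitarity of $L_\mathcal{M}$ on $L^2(\mathbb{R}^2)$ together with the convolution theorem (Lemma~\ref{lem:Se-Conv}) and the fact that $|\overline{\eta}_\mathcal{M}(\xi)|=1$, I obtain
\[
\|h\|_{L^2}^2=\int_{\mathbb{R}^2}|(L_\mathcal{M}s)(\xi)|^2\,|(L_\mathcal{M}\phi)(\xi)|^2\,\mathrm{d}\xi .
\]
By the periodicity of $|(L_\mathcal{M}s)(\xi)|$ with periodicity matrix $B$ (also from Lemma~\ref{lem:Se-Conv}) and the tiling of $\mathbb{R}^2$ by the cells $\{B(x+k):x\in\mathbb{T}^2\}$ analogous to \eqref{ssheng}, I fold the integral onto the fundamental domain and collect the lattice sum into the bracket function $\Phi(t):=\sum_{k\in\mathbb{Z}^2}|(L_\mathcal{M}\phi)(t+Bk)|^2$:
\[
\|h\|_{L^2}^2=\int_{\{Bx:\,x\in\mathbb{T}^2\}}|(L_\mathcal{M}s)(t)|^2\,\Phi(t)\,\mathrm{d}t .
\]
Finally, writing $(L_\mathcal{M}s)(Bx)$ as a unimodular chirp times the Fourier series $\sum_k s(k)\lambda_\mathcal{M}(k)e^{-2i\pi k^Tx}$ and applying Parseval on $\mathbb{T}^2$ (using $|\lambda_\mathcal{M}(k)|=1$ and $|\det(iB)|=|\det B|$), I get the DT-LCT Plancherel identity $\int_{\{Bx:\,x\in\mathbb{T}^2\}}|(L_\mathcal{M}s)(t)|^2\,\mathrm{d}t=\|s\|_{l^2}^2$.

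For sufficiency, if $\eta_1\le\Phi(t)\le\eta_2$ on the fundamental domain, then sandwiching $\Phi$ inside the folded integral and invoking this Plancherel identity yields exactly $\eta_1\|s\|^2\le\|h\|^2\le\eta_2\|s\|^2$, the Riesz bounds. For necessity I would run the converse: the stability inequalities for all $s$ force $\eta_1\le\Phi(t)\le\eta_2$ a.e. The point is that, via the same Parseval identification, $s\mapsto(L_\mathcal{M}s)|_{\{Bx\}}$ has dense range in $L^2$ of the fundamental domain, so $|(L_\mathcal{M}s)|^2$ can be made to approximate a normalized indicator of a shrinking ball around any Lebesgue point of $\Phi$; testing the folded identity against such $s$ reads off $\operatorname{ess\,inf}\Phi$ and $\operatorname{ess\,sup}\Phi$ as the sharp constants, and the assumed stability forces them into $(0,\infty)$.

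I expect this last localization/essential-range step to be the main obstacle, since it requires the density of the range of $s\mapsto(L_\mathcal{M}s)|_{\{Bx\}}$ and a careful treatment of null sets and Lebesgue points; by contrast, the forward reduction, the unitarity argument, the domain-folding, and the DT-LCT Plancherel computation are all routine once Lemma~\ref{lem:Se-Conv} is in hand.
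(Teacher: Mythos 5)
Your proposal follows essentially the same route as the paper's proof: apply Lemma~\ref{lem:Se-Conv} and take moduli, fold the integral over the $B$-translated cells onto the fundamental domain using the $B$-periodicity of $|(L_\mathcal{M}s)|$ to produce the Grammian $G_{\phi,\mathcal{M}}(\xi)=\sum_{k}|(L_\mathcal{M}\phi)(\xi+Bk)|^2$, and combine with the Parseval identity $\int_{\{Bx\,|\,x\in\mathbb{T}^2\}}|(L_\mathcal{M}s)(\xi)|^2\,\mathrm{d}\xi=\|s\|_{l^2}^2$ to sandwich $\|f\|_{L^2}^2$ between $\eta_1\|s\|_{l^2}^2$ and $\eta_2\|s\|_{l^2}^2$. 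The only substantive difference is that you make explicit the localization/essential-range argument needed for the necessity direction, which the paper's proof compresses into the single word ``equivalent.''
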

\begin{proof}
For $f(t)=(s\star_{sd}\phi)(t)$, by Lemma~\ref{lem:Se-Conv}, we have
\[
(L_\mathcal{M}f)(\xi)=e^{-i\pi\xi^TDB^{-1}\xi}(L_\mathcal{M}s)(\xi)(L_\mathcal{M}\phi)(\xi).
\]
Thus,
\[
|(L_\mathcal{M}f)(\xi)|^2=|(L_\mathcal{M}s)(\xi)|^2|(L_\mathcal{M}\phi)(\xi)|^2.
\]
Since $|(L_\mathcal{M}s)(\xi)|$ is periodic with periodicity matrix $B$, we have, from (\ref{ssheng}),
\begin{align}
\|(L_\mathcal{M}f)(\xi)\|_{L^2(\mathbb{R}^2)}^2
&=\int_{\mathbb{R}^2}|(L_\mathcal{M}s)(\xi)|^2|(L_\mathcal{M}\phi)(\xi)|^2{\rm{d}}\xi\nonumber\\
&=\sum_{k\in \mathbb{Z}^2}\int_{\left\{B(t+k) \;| \,t\in \mathbb{T}^2\right\}}|(L_\mathcal{M}s)(\xi)|^2|(L_\mathcal{M}\phi)(\xi)|^2{\rm{d}}\xi\nonumber\\
&=\sum_{k\in \mathbb{Z}^2}\int_{\left\{Bt \;| \,t\in \mathbb{T}^2\right\}}|(L_\mathcal{M}s)(\xi+Bk)|^2|(L_\mathcal{M}\phi)(\xi+Bk)|^2{\rm{d}}\xi\nonumber\\
&=\int_{\left\{Bt \;| \,t\in \mathbb{T}^2\right\}}|(L_\mathcal{M}s)(\xi)|^2\sum_{k\in \mathbb{Z}^2}|(L_\mathcal{M}\phi)(\xi+Bk)|^2{\rm{d}}\xi\nonumber\\
&:=\int_{\left\{Bt \;| \,t\in \mathbb{T}^2\right\}}|(L_\mathcal{M}s)(\xi)|^2G_{\phi,\mathcal{M}}(\xi){\rm{d}}\xi,\label{nimei}
\end{align}
where $G_{\phi,\mathcal{M}}(\xi)=\sum_{k\in \mathbb{Z}^2}|(L_\mathcal{M}\phi)(\xi+Bk)|^2$ is the Grammian of $\phi$ associated with the 2D-NS-LCT. Notice that
\begin{align}
&\int_{\left\{Bt \;| \,t\in \mathbb{T}^2\right\}}|(L_\mathcal{M}s)(\xi)|^2{\rm{d}}\xi\nonumber\\
=&\frac{1}{\det{(B)}}\sum_{n\in \mathbb{Z}^2}\sum_{k\in \mathbb{Z}^2}s(n)\overline{s(k)}e^{i\pi n^TB^{-1}An-i\pi k^TB^{-1}Ak}\int_{\left\{Bt \;| \,t\in \mathbb{T}^2\right\}}
e^{-2i\pi (n-k)^{T}B^{-1}\xi}{\rm{d}}\xi\nonumber\\
=&\sum_{n\in \mathbb{Z}^2}\sum_{k\in \mathbb{Z}^2}s(n)\overline{s(k)}e^{i\pi n^TB^{-1}An-i\pi k^TB^{-1}Ak}\delta_{n,k}\nonumber\\
=&\sum_{n\in \mathbb{Z}^2}|s(n)|^2=\|s\|_{l^2}^2.
\end{align}
From (\ref{nimei}), we know that
\[
0<\eta_1\le G_{\phi,\mathcal{M}}(\xi)\le \eta_2<+\infty\, \mbox{ for all }\xi\in \left\{Bx \;| \,x\in \mathbb{T}^2\right\}
\]
is equivalent to
\[
\eta_1\|L_\mathcal{M}s\|^2=\eta_1\|s[k]\|_{l^2}^2\le\|(L_\mathcal{M}f)(\xi)\|_{L^2(\mathbb{R}^2)}^2\le \eta_2\|s[k]\|_{l^2}^2\le \eta_2\|L_\mathcal{M}s\|^2.
\]
This completes the proof.
\end{proof}

The local behavior and global decay of $\phi$ can be described in terms of the Wiener amalgam spaces as follows. A measurable function $f$ belongs to the Wiener
amalgam space $W(L^p(\mathbb{R}^2)),1\le p<\infty$, if it satisfies
\begin{equation}
\|f\|_{W(L^p(\mathbb{R}^2))}^p:=\sum_{k\in \mathbb{Z}^2}{\rm{ess}}\sup\{|f(x+k)|^p; x\in \mathbb{T}^2\}<+\infty.
\end{equation}

If $p=\infty$, a measurable function $f$ belongs to $W(L^\infty(\mathbb{R}^2))=L^\infty(\mathbb{R}^2)$, if it satisfies
\begin{equation}
\|f\|_{W(L^\infty(\mathbb{R}^2))}^p:=\sup_{k\in \mathbb{Z}^2}\{{\rm{ess}\sup}\{|f(x+k)|; x\in\mathbb{T}^2\}\}<+\infty.
\end{equation}

Considering that ideal sampling makes sense only for continuous functions, we therefore focus on the amalgam space $W_0(L^p(\mathbb{R}^2)):=W(L^p(\mathbb{R}^2))\cap
C(\mathbb{R}^2)$, where $C(\mathbb{R}^2)$ denotes the space of continuous functions on $\mathbb{R}^2$.

The multivariate dynamical sampling problem in shift-invariant spaces is to recover a function $f\in V(\phi)$ from its dynamical sampling measurements associated with the 2D-NS-LCT, i.e.,
\[
\left\{(a^j\star_c f)(M^Tk): j=1,2,\cdots,m-1,\;k\in \mathbb{Z}^2\right\},
\]
where $a^j=\underbrace{a\star_c\cdots\star_{c} a}_{j}$ and $a\in W(L^1(\mathbb{R}^2))$.

First, we present an important lemma, which will be used later.
\begin{Lemma}\label{lem:C1}
Let a sequence $s\in l^2(\mathbb{Z}^2)$ and two functions $f, g\in L^2(\mathbb{R}^2)$. Then, we have
\begin{equation}\label{eq:C1}
f\star_c(s\star_{sd} g)=s\star_{sd}(f\star_c g)
\end{equation}
\end{Lemma}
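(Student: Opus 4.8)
The plan is to pass both sides of (\ref{eq:C1}) into the 2D-NS-LCT domain, where the mixed canonical convolutions $\star_c$ and $\star_{sd}$ each collapse into a pointwise product (up to the chirp factor $\overline{\eta}_\mathcal{M}$), and then to invoke the injectivity of $L_\mathcal{M}$ on $L^2(\mathbb{R}^2)$ to conclude that the two sides coincide. Concretely, I would first set $h=s\star_{sd}g$ and $p=f\star_c g$, noting that both are elements of $L^2(\mathbb{R}^2)$ so that the convolution theorems of Section~\ref{sec2} are applicable to them.

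For the left-hand side, applying Lemma~\ref{lem:Conv1} to $f\star_c h$ gives $(L_\mathcal{M}(f\star_c h))(\xi)=\overline{\eta}_\mathcal{M}(\xi)(L_\mathcal{M}f)(\xi)(L_\mathcal{M}h)(\xi)$, and then substituting the expression for $(L_\mathcal{M}h)(\xi)$ supplied by Lemma~\ref{lem:Se-Conv} yields
\[
(L_\mathcal{M}(f\star_c(s\star_{sd}g)))(\xi)=\overline{\eta}_\mathcal{M}^2(\xi)(L_\mathcal{M}f)(\xi)(L_\mathcal{M}s)(\xi)(L_\mathcal{M}g)(\xi).
\]
For the right-hand side, applying Lemma~\ref{lem:Se-Conv} to $s\star_{sd}p$ and then Lemma~\ref{lem:Conv1} to $(L_\mathcal{M}p)(\xi)$ produces
\[
(L_\mathcal{M}(s\star_{sd}(f\star_c g)))(\xi)=\overline{\eta}_\mathcal{M}^2(\xi)(L_\mathcal{M}s)(\xi)(L_\mathcal{M}f)(\xi)(L_\mathcal{M}g)(\xi).
\]
Since scalar multiplication is commutative, these two products agree for every $\xi$, so the two sides of (\ref{eq:C1}) have identical 2D-NS-LCT.

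To finish, I would invoke the injectivity of $L_\mathcal{M}$ on $L^2(\mathbb{R}^2)$: since $L_\mathcal{M}$ is, up to the nonvanishing chirp factors $\lambda_\mathcal{M}$ and $\eta_\mathcal{M}$ and the constant $1/\sqrt{\det(iB)}$, simply the Fourier transform of $\overrightarrow{f}$, it inherits injectivity from the Fourier transform; hence equality of transforms forces equality of the two functions almost everywhere. The only genuinely delicate point is the well-definedness and $L^2$-membership of the intermediate objects $s\star_{sd}g$ and $f\star_c g$, which must be secured before Lemmas~\ref{lem:Conv1} and~\ref{lem:Se-Conv} can legitimately be applied; granting that the standing hypotheses on $s$, $f$, and $g$ guarantee this, the transform-domain computation above is entirely routine and the identity follows.
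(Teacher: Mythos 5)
Your argument is correct in substance, but it takes a genuinely different route from the paper. The paper proves Lemma~\ref{lem:C1} by a direct time-domain computation: it writes out $(f\star_c(s\star_{sd}g))(t)$ explicitly from Definitions~\ref{ C-Can-Conv} and~\ref{Se-Conv}, interchanges the sum over $k$ with the integral over $x$, substitutes $y=x-k$, and recognizes the result as $(s\star_{sd}(f\star_c g))(t)$ pointwise; the chirp factors $e^{\pm i\pi(\cdot)^TB^{-1}A(\cdot)}$ recombine exactly along the way. You instead push both sides through $L_\mathcal{M}$ via Lemmas~\ref{lem:Conv1} and~\ref{lem:Se-Conv}, observe that both transforms equal $\overline{\eta}_\mathcal{M}^2(\xi)(L_\mathcal{M}f)(\xi)(L_\mathcal{M}s)(\xi)(L_\mathcal{M}g)(\xi)$, and conclude by injectivity of $L_\mathcal{M}$. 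Your route is shorter and makes the commutativity transparent (it reduces to commutativity of pointwise multiplication), and it reuses the machinery the paper has already built. What it costs you is exactly the point you flag at the end: to apply Lemma~\ref{lem:Conv1} to $f\star_c h$ you need $h=s\star_{sd}g\in L^2(\mathbb{R}^2)$, and to apply Lemma~\ref{lem:Se-Conv} to $s\star_{sd}p$ you need $p=f\star_c g\in L^2(\mathbb{R}^2)$; for general $f,g\in L^2$ and $s\in l^2$ neither membership is automatic (an $L^2$--$L^2$ convolution lands in $L^\infty\cap C_0$, not in $L^2$), and your conclusion is only an almost-everywhere identity. The paper's direct computation sidesteps the need for the intermediate transforms to exist and yields the identity pointwise wherever the defining series and integrals converge absolutely, though it silently performs the same interchange of sum and integral that would need Fubini-type justification. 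At the level of rigor the paper itself adopts, both arguments are equally acceptable; if you want your version to be airtight, state explicitly the hypotheses (e.g. $g$ in a Wiener amalgam space, as the paper later assumes in Section~\ref{sec:shift}) under which the intermediate objects lie in $L^2(\mathbb{R}^2)$.
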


\begin{proof}
Letting $h_1(t)=(s\star_{sd}g)(t)$ and $h_2(t)=(f\star_{c}g)(t)$, we have
\begin{align*}
&(f\star_c h_1)(t)
=\frac{1}{\sqrt{\det{(iB)}}}e^{-i\pi t^{T}B^{-1}At}(\overrightarrow{f}\ast\overrightarrow{h}_1)(t)\\
=&\frac{1}{\sqrt{\det{(iB)}}}e^{-i\pi t^{T}B^{-1}At}\int_{\mathbb{R}^2}f(t-x)e^{i\pi(t-x)^{T}B^{-1}A(t-x)}h_1(x)e^{i\pi x^{T}B^{-1}Ax}{\rm{d}}x\nonumber\\
=&\frac{1}{\det{(iB)}}e^{-i\pi t^{T}B^{-1}At}\int_{\mathbb{R}^2}f(t-x)e^{i\pi(t-x)^{T}B^{-1}A(t-x)}\\
\quad &\times \bigg(\sum_{k\in \mathbb{Z}^2}s(k)e^{i\pi k^{T}B^{-1}Ak}g(x-k)e^{i\pi (x-k)^{T}B^{-1}A(x-k)}\bigg){\rm{d}}x\\
=&\frac{1}{\det{(iB)}}e^{-i\pi t^{T}B^{-1}At}\sum_{k\in \mathbb{Z}^2}s(k)e^{i\pi k^{T}B^{-1}Ak}\bigg(\int_{\mathbb{R}^2}f(t-k-y)\\
\quad &\times e^{i\pi(t-k-y)^{T}B^{-1}A(t-k-y)}g(y)e^{i\pi y^{T}B^{-1}Ay}{\rm{d}}y\bigg)\\
=&\frac{1}{\sqrt{\det{(iB)}}}e^{-i\pi t^{T}B^{-1}At}\sum_{k\in \mathbb{Z}^2}s(k)e^{i\pi k^{T}B^{-1}Ak}h_2(t-k)e^{i\pi (t-k)^{T}B^{-1}A(t-k)}\nonumber\\
=&(s\star_{sd} h_2)(t).
\end{align*}
This completes the proof.
\end{proof}

In the following, we propose a sufficient
and necessary condition for stably recovering $f$ from its dynamical sampling measurements $f(M^Tk), (a^j\star_c f)(M^Tk), j=1,2,\cdots,m-1, k\in \mathbb{Z}^2$ associated with the 2D-NS-LCT.

\begin{Theorem}
Let $\phi=\phi_0\in W_0(L^1(\mathbb{R}^2)), a\in W(L^1(\mathbb{R}^2))$, and $\phi_j=a^j\star_c \phi$, then $L_\mathcal{M}\phi_l^j\in C(\mathbb{R}^2)$ for $l, j=0,1,\cdots,m-1$, where $\phi_l^j(r)=\phi_j(M^Tr-\eta_l)e^{i\pi (M^Tr-\eta_l)^{T}B^{-1}A(M^Tr-\eta_l)}$. Any $f\in V(\phi)$ can be recovered in a stable way, i.e., the inverse of $\mathcal{B}(\xi)$ is bounded, from the dynamical sampling measurements $f(M^Tk), a^j\star_{c}f(M^Tk), j=1,2,\cdots, m-1,
\;k\in \mathbb{Z}^2$, if and only if $\det(\mathcal{B}(\xi))\ne 0$ for any $\xi\in \mathbb{R}^2$, where $\mathcal{B}(\xi)$ is defined by
\[
\mathcal{B}(\xi)=\left[
                   \begin{array}{cccc}
                     (L_\mathcal{M}\phi_0^0)(\xi) & (L_\mathcal{M}\phi_1^0)(\xi) & \cdots & (L_\mathcal{M}\phi_{m-1}^0)(\xi) \\
                     (L_\mathcal{M}\phi_0^1)(\xi) & (L_\mathcal{M}\phi_1^1)(\xi) & \cdots & (L_\mathcal{M}\phi_{m-1}^1)(\xi)\\
                     \vdots & \vdots & \vdots & \vdots \\
                     (L_\mathcal{M}\phi_0^{m-1})(\xi) & (L_\mathcal{M}\phi_1^{m-1})(\xi) & \cdots & (L_\mathcal{M}\phi_{m-1}^{m-1})(\xi)\\
                   \end{array}
                 \right].
\]
\end{Theorem}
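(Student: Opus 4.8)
The plan is to mirror the argument of Theorem~\ref{thm:dynam}, transporting the problem from $f$ to its coefficient sequence $s$ and then analyzing an $m\times m$ matrix in the 2D-NS-LCT domain. First I would use that $f\in V(\phi)$ means $f=s\star_{sd}\phi$ for some $s\in l^2(\mathbb{Z}^2)$, and that under the Riesz-basis framework of Theorem~\ref{thm:Riesz} the synthesis map $s\mapsto f$ is bounded above and below; consequently, recovering $f$ stably is equivalent to recovering $s$ stably. The crucial algebraic simplification comes from Lemma~\ref{lem:C1}: since $a^j\star_c f=a^j\star_c(s\star_{sd}\phi)=s\star_{sd}(a^j\star_c\phi)=s\star_{sd}\phi_j$, each measurement sequence is $y_j(k)=(s\star_{sd}\phi_j)(M^Tk)$, i.e.\ the sampling on the lattice $M^T\mathbb{Z}^2$ of a single semidiscrete canonical convolution.

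Next I would expand $(s\star_{sd}\phi_j)(M^Tk)$ by Definition~\ref{Se-Conv}, multiply through by $\lambda_{\mathcal{M}}(M^Tk)$ to clear the leading chirp, and split the summation index $n\in\mathbb{Z}^2$ over the cosets $n=M^Tp+\eta_l$, $p\in\mathbb{Z}^2$, $l=0,\dots,m-1$. With $r=k-p$ this identifies $\lambda_{\mathcal{M}}(M^Tr-\eta_l)\phi_j(M^Tr-\eta_l)=\phi_l^j(r)$ exactly as defined in the statement, so that $\lambda_{\mathcal{M}}(M^Tk)y_j(k)$ becomes a finite sum over $l$ of ordinary discrete convolutions of the modulated polyphase components of $s$ with the sequences $\phi_l^j$. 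Applying the 2D-DT-NS-LCT in the $k$-variable, in the spirit of Lemma~\ref{lem:D-Conv} together with the Poisson-type identity of Lemma~\ref{lem:Poi}, turns these convolutions into products and yields a matrix identity $\mathcal{Y}(\xi)=\mathcal{B}(\xi)\mathcal{S}(\xi)$, where $\mathcal{Y}(\xi)$ collects the transforms of the $m$ modulated measurement sequences, the entries of $\mathcal{S}(\xi)$ are the canonical transforms of the $m$ polyphase components of $s$, and $\mathcal{B}(\xi)$ is precisely the matrix in the statement with entries $(L_{\mathcal{M}}\phi_l^j)(\xi)$.

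Stable recovery of $f$ then reduces to solving this system for $\mathcal{S}(\xi)$ with a uniformly bounded inverse, i.e.\ to the invertibility of $\mathcal{B}(\xi)$ with $\|\mathcal{B}(\xi)^{-1}\|$ bounded uniformly in $\xi$. The decisive point, and the place where this theorem genuinely differs from Theorem~\ref{thm:dynam}, is the passage from ``$\mathcal{B}(\xi)^{-1}$ bounded for a.e.\ $\xi$'' to ``$\det(\mathcal{B}(\xi))\ne0$ for every $\xi$.'' Here the hypotheses $\phi\in W_0(L^1(\mathbb{R}^2))$ and $a\in W(L^1(\mathbb{R}^2))$ enter: they guarantee, as asserted in the statement, that every entry $L_{\mathcal{M}}\phi_l^j$ is continuous, hence $\det(\mathcal{B}(\xi))$ is a continuous function of $\xi$. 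Its modulus is periodic with respect to the relevant lattice, so its infimum is attained on a compact fundamental domain; therefore $\det(\mathcal{B}(\xi))$ is bounded away from zero uniformly if and only if it never vanishes, which is exactly the condition $\det(\mathcal{B}(\xi))\ne0$ for all $\xi\in\mathbb{R}^2$. For the converse I would note that if $\det(\mathcal{B}(\xi_0))=0$ at some $\xi_0$, then a null vector at $\xi_0$ combined with continuity produces coefficient configurations whose measurements are arbitrarily small relative to their norm near $\xi_0$, destroying the lower stability bound.

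I expect the main obstacle to be the precise bookkeeping of the chirp factors in the second paragraph: matching $\lambda_{\mathcal{M}}$ evaluated at the arguments $M^Tp+\eta_l$ and $M^T(k-p)-\eta_l$ against the chirp built into the definition of $\phi_l^j$, and tracking the residual $\overline{\eta}_{\mathcal{M}}$ and $1/\sqrt{\det(iB)}$ factors through the transform so that the matrix which emerges is \emph{exactly} $\mathcal{B}(\xi)$ rather than a chirp-distorted variant. The continuity-and-compactness argument for the equivalence is conceptually the heart of the result but is technically routine once the entries are known to be continuous.
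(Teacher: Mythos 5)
Your proposal follows the same route as the paper's proof: write $f=s\star_{sd}\phi$, use Lemma~\ref{lem:C1} to pass to $a^j\star_c f=s\star_{sd}\phi_j$, split the summation over the cosets $M^T\mathbb{Z}^2+\eta_l$ to obtain $v_j=\sum_{l=0}^{m-1}s_l\star_{sd}\phi_l^j$, and transform termwise to reach the matrix identity $(L_\mathcal{M}v)(\xi)=\mathcal{B}(\xi)(L_\mathcal{M}s)(\xi)$. Your closing continuity--periodicity--compactness argument equating uniform invertibility with the pointwise condition $\det(\mathcal{B}(\xi))\ne 0$ is in fact spelled out more fully than in the paper, whose proof stops at the matrix identity and merely notes that the system can be solved when $\mathcal{B}(\xi)$ is invertible.
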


\begin{proof}
Given a function $f\in V(\phi)$, we have
\[
f=(s\star_{sd}\phi)(t).
\]
By Lemma~\ref{lem:C1}, we obtain
\begin{align*}
v_j(k)\triangleq&(a^j\star_cf)(M^Tk)e^{i\pi(M^Tk)^TB^{-1}AM^Tk-i\pi k^TB^{-1}Ak}\\
      =&[a^j\star_c(s\star_{sd}\phi)](M^Tk)e^{i\pi(M^Tk)^TB^{-1}AM^Tk-i\pi k^TB^{-1}Ak}\\
      =&[s\star_{sd}(a^j\star_c \phi)](M^Tk)e^{i\pi(M^Tk)^TB^{-1}A(M^Tk)-i\pi k^TB^{-1}Ak}\\
      =&\frac{1}{\sqrt{\det{(iB)}}}e^{-i\pi k^{T}B^{-1}Ak}\sum_{n\in \mathbb{Z}^2}s(n)e^{i\pi n^{T}B^{-1}An}(a^j\star_c \phi)(M^Tk-n)\\
      &\quad\times e^{i\pi (M^Tk-n)^{T}B^{-1}A(M^Tk-n)}\\
      :=&\frac{1}{\sqrt{\det{(iB)}}}e^{-i\pi k^{T}B^{-1}Ak}\sum_{n\in \mathbb{Z}^2}s(n)e^{i\pi n^{T}B^{-1}An}\phi_j(M^Tk-n)\\
      &\quad\times e^{i\pi (M^Tk-n)^{T}B^{-1}A(M^Tk-n)}\\
      =&\frac{1}{\sqrt{\det{(iB)}}}e^{-i\pi k^{T}B^{-1}Ak}\sum_{l=0}^{m-1}\sum_{r\in\mathbb{Z}^2}s( M^Tr+\eta_l)e^{i\pi (M^Tr+\eta_l)^{T}B^{-1}A( M^Tr+\eta_l)}\\
      &\quad\times \phi_j(M^Tk-M^Tr-\eta_l)e^{i\pi (M^Tk-M^Tr-\eta_l)^{T}B^{-1}A(M^Tk-M^Tr-\eta_l)}\\
     =&\sum_{l=0}^{m-1}(s_l\star_{sd}\phi_l^j)(k),
\end{align*}
where $s_l(r)=s(M^Tr+\eta_l)e^{i\pi (M^Tr+\eta_l)^{T}B^{-1}A( M^Tr+\eta_l)}$.

Then,
by Lemma~\ref{lem:Se-Conv}, we readily have, for $j=1,2,\cdots,m-1$,
\begin{equation}\label{hdsda}
(L_{\mathcal{M}}v_j)(\xi)=\sum_{l=0}^{m-1}e^{-i\pi\xi^TDB^{-1}\xi}(L_\mathcal{M}s_l)(\xi)(L_\mathcal{M}\phi_l^j)(\xi).
\end{equation}
Let
\begin{equation*}
(L_{\mathcal{M}}v)(\xi)=\left[
                             \begin{array}{c}
                              (L_{\mathcal{M}}v_0)(\xi) \\
                              (L_{\mathcal{M}}v_1)(\xi) \\
                              \vdots\\
                              (L_{\mathcal{M}}v_{m-1})(\xi) \\
                              \end{array}
                              \right]
\end{equation*}
and
\begin{equation*}
(L_{\mathcal{M}}s)(\xi)=e^{-i\pi\xi^TD^{-1}B\xi}\left[
                                                     \begin{array}{c}
                                                      (L_{\mathcal{M}}s_0)(\xi) \\
                                                      (L_{\mathcal{M}}s_1)(\xi) \\
                                                      \vdots\\
                                                      (L_{\mathcal{M}}s_{m-1})(\xi) \\
                                                      \end{array}
                                                    \right].
\end{equation*}
Hence, from (\ref{hdsda}), we get
\begin{equation}\label{shi-2}
(L_{\mathcal{M}}v)(\xi)=\mathcal{B}(\xi)(L_{\mathcal{M}}s)(\xi).
\end{equation}
That is to say, we can solve the equation (\ref{shi-2}) with respect to $(L_{\mathcal{M}}s)(\xi)$, if $\mathcal{B}(\xi)$ is invertible.
\end{proof}

\section{Example of multivariate dynamical sampling}\label{jjjia}
In this section, to make the main results obtained above more transparent and more complete, we give a simple example to show that the necessary and sufficient condition in Theorem \ref{thm:dynam} is feasible.

Let matrix parameters $A, B, C, D$ of the 2D-DT-NS-LCT be
\[
A=\left[
  \begin{array}{cc}
    1 & 0 \\
    0 & 1 \\
  \end{array}
\right],\,
B=\left[
  \begin{array}{cc}
    1 & 1 \\
    1 & 3 \\
  \end{array}
\right],\,
C=\left[
  \begin{array}{cc}
    -0.5 & 0.5\\
    0.5 & 0.5 \\
  \end{array}
\right],\,
D=B,
\]
which obviously satisfy (\ref{ssads}). Let the dilation matrix $M\in\mathbb{Z}^{2\times2}$ be given by $M=B$. Then, we have $m=|\det(M)|=2$, and $\gamma_0=[0, 0]^T, \gamma_1=[1, 2]^T$. A sequence $a$ is given by
\[
a(k)=\begin{cases}
    c_1, & k=[-1, -1]^T,\\
    c_2, & k=[-1, -2]^T,\\
    0,  & \mbox{otherwise},
\end{cases}
\]
where $c_1, c_2\in\mathbb{R}$ and $c_2\neq0$. Let $\xi=[\xi_1, \xi_2]^T$, then we have
\[
\begin{split}
e_{0}^{0}&=\overline{\eta}_{\mathcal{M}}(\xi)=e^{-i\pi(\xi_1^2+\xi_2^2)},\\
e_{0}^{1}&=\overline{\eta}_{\mathcal{M}}(\xi+\gamma_1)=e^{-i\pi[(\xi_1+1)^2+(\xi_2+2)^2]},\\
e_{1}^{0}&=\overline{\eta}_{\mathcal{M}}^{2}(\xi)=e^{-2i\pi(\xi_1^2+\xi_2^2)},\\
e_{1}^{1}&=\overline{\eta}_{\mathcal{M}}^{2}(\xi+\gamma_1)=e^{-2i\pi[(\xi_1+1)^2+(\xi_2+2)^2]},\\
(L_\mathcal{M}a)(\xi)&=-\frac{c_1}{\sqrt{2}i}e^{i\pi(2\xi_1+\xi_1^2+\xi_2^2)}-\frac{c_2}{\sqrt{2}}e^{i\pi(\xi_1+\xi_2+\xi_1^2+\xi_2^2)},\\
(L_\mathcal{M}a)(\xi+\gamma_1)&=-\frac{c_1}{\sqrt{2}i}e^{i\pi[2\xi_1+(\xi_1+1)^2+(\xi_2+2)^2]}+\frac{c_2}{\sqrt{2}}e^{i\pi[\xi_1+\xi_2+(\xi_1+1)^2+(\xi_2+2)^2]}.
\end{split}
\]
Therefore, through easy computation, we get
\[
\begin{split}
\left|\det(\mathcal{A}_{\mathcal{M}}(\xi))\right|&=
\left|\det\left(\left[
                     \begin{array}{cc}
                       e_0^0 & e_0^1 \\
                       e_1^0(L_\mathcal{M}a)(\xi) & e_1^1(L_\mathcal{M}a)(\xi+\gamma_1)\\
                     \end{array}
                   \right]\right)\right|\\
                   &=\left|\sqrt{2}c_2e^{-i\pi[(\xi_1+1)^2+(\xi_2+2)^2-\xi_1-\xi_2+\xi_1^2+\xi_2^2]}\right|\\
                   &=\sqrt{2}\left|c_2\right|>0.
\end{split}
\]
That is to say, the sequence $a$ we choose can make $\mathcal{A}_{\mathcal{M}}(\xi)$ invertible for any $\xi$, thereby Theorem \ref{thm:dynam} holds.

\section{Conclusion}\label{sec5}
In this paper, we investigate the multivariate dynamical sampling in the $l^2(\mathbb{Z}^2)$ space associated with the 2D-DT-NS-LCT and shift-invariant spaces associated with the 2D-NS-LCT, respectively. More specifically, we obtain
a sufficient and necessary condition under which a sequence in $l^2(\mathbb{Z}^2)$ (or a function in a shift-invariant space $V(\phi)$ that is generated by $\phi\in L^2(\mathbb{R}^2)$) can be stably recovered from its dynamical sampling measurements associated with the 2D-DT-NS-LCT (or the 2D-NS-LCT). Our results extend the original ones in the FT domain.

%

\section*{References}

\end{document}